\documentclass[reqno]{amsart}
\usepackage{amssymb, amsmath, amsthm, amscd, mathrsfs}

\usepackage{paralist}
\usepackage{cite}
\usepackage{bigints}
\usepackage{dsfont}
\usepackage{empheq}
\usepackage{amssymb}
\usepackage{cases}
\usepackage{enumitem}
\allowdisplaybreaks

\usepackage{caption}
\usepackage{booktabs}

\usepackage{float}
\usepackage[ruled,vlined,linesnumbered,noresetcount]{algorithm2e}

\usepackage{hyperref}

\theoremstyle{plain}
\newtheorem{theorem}{Theorem}[section]

\newtheorem{lemma}[theorem]{Lemma}

\theoremstyle{definition}
\newtheorem{definition}[theorem]{Definition}
\newtheorem{remark}{Remark}

\def \dv {\mathrm{div}}
\def \d {\mathrm{d}}

\title[Impulsive heat equation with dynamic boundary conditions] 
      {Impulse null approximate controllability for heat equation with dynamic boundary conditions}

\author{S. E. Chorfi}
\author{G. El Guermai}
\author{L. Maniar}
\author{W. Zouhair}

\address{S. E. Chorfi, G. El Guermai, L. Maniar and W. Zouhair, Cadi Ayyad University, Faculty of Sciences Semlalia, LMDP, UMMISCO (IRD-UPMC), B.P. 2390, Marrakesh, Morocco}

\email{chorphi@gmail.com, ghita.el.guermai@gmail.com, maniar@uca.ma, walid.zouhair.fssm@gmail.com}

\makeatletter 
\@namedef{subjclassname@2020}{%
  \textup{2020} Mathematics Subject Classification}
\makeatother

\subjclass[2020]{Primary: 35R12, 49N25; Secondary: 93C27.}
 \keywords{Impulsive approximate controllability, impulsive control problems, Carleman commutator, logarithmic convexity, dynamic boundary conditions.}

\begin{document}
\begin{abstract}
The main purpose of this article is to prove a logarithmic convexity estimate for the solution of a linear heat equation subject to dynamic boundary conditions in a bounded convex domain. As an application, we prove the impulsive null approximate controllability for an impulsive heat equation with dynamic boundary conditions.
\end{abstract}

\maketitle

\section{Introduction}
It is well-known that the heat equation is one of the most significant partial differential equations of parabolic type. It is a model for a large class of physical phenomena which describes the distribution of heat of a homogeneous medium over time.

Let $\Omega\subset \mathbb{R}^n$ be a bounded convex domain of smooth boundary $\Gamma$ of class $C^2$ and let $T>0$ be a final fixed time. More precisely, we consider the following controlled system
\begin{empheq}[left = \empheqlbrace]{alignat=2} \label{1.1}
\begin{aligned}
&\partial_{t} \psi-\Delta \psi=0, && \qquad\text { in } \Omega \times(0, T) \backslash\{\tau\},\\
&\psi(\cdot, \tau)=\psi\left(\cdot, \tau^{-}\right)+\mathds{1}_{\omega} h(\cdot,\tau), && \qquad\text { in } \Omega,\\
&\partial_{t}\psi_{\Gamma} - \Delta_{\Gamma} \psi_{\Gamma} + \partial_{\nu}\psi =0, && \qquad\text { on } \Gamma \times(0, T)\backslash\{\tau\}, \\
&\psi_{\Gamma}(\cdot, \tau)=\psi_{\Gamma}\left(\cdot, \tau^{-}\right), && \qquad\text { on } \Gamma,\\
& \psi_{\Gamma}(x,t) = \psi_{|\Gamma}(x,t), &&\qquad\text{ on } \Gamma \times(0, T) , \\
& \left(\psi(\cdot, 0),\psi_{\Gamma}(\cdot, 0)\right)=\left(\psi^{0},\psi^{0}_{\Gamma}\right), && \qquad \text{ on } \Omega\times\Gamma,
\end{aligned}
\end{empheq}
where $\left(\psi^{0},\psi^{0}_{\Gamma}\right)\in \mathbb{L}^2$ denotes the initial condition, $\psi(\cdot,\tau^{-})$ denotes the left limit of the function $\psi$ at time $\tau \in (0,T)$, $\omega\Subset \Omega$ is a nonempty open subset, and $\mathds{1}_\omega$ stands for the characteristic function of $\omega$. We also denote by $\psi_{|\Gamma}$ the trace of $\psi$, and $\partial_{\nu} \psi:=(\nabla \psi \cdot \nu)_{|\Gamma}$ denotes the normal derivative, where $\nu$ stands for the unit exterior field normal to $\Gamma$. The tangential gradient will be denoted as $\nabla_{\Gamma} \psi=\nabla \psi-\left(\partial_{\nu} \psi\right) \nu$. Let $\mathrm{g}$ denote the standard Riemannian metric on $\Gamma$ inherited from $\mathbb{R}^n$. The Laplace-Beltrami operator $\Delta_\Gamma$ is defined locally as follows
\begin{equation*}
\Delta_\Gamma=\frac{1}{\sqrt{|\mathrm{g}|}} \sum_{i,j=1}^{N-1} \frac{\partial}{\partial x^i} \left(\sqrt{|\mathrm{g}|}\, \mathrm{g}^{ij} \frac{\partial}{\partial x^j}\right),\label{eqlb}
\end{equation*}
where $\mathrm{g}=\left(\mathrm{g}_{ij}\right)$ is the metric tensor, $\mathrm{g}^{-1}=\left(\mathrm{g}^{ij}\right)$ its inverse and $|\mathrm{g}|=\det\left(\mathrm{g}_{ij}\right)$. Finally, we will denote the Hessian matrix of $\psi$ with respect to $\mathrm{g}$ by $\nabla^2_\Gamma \psi$. In the sequel, we mainly use the divergence formula
\begin{equation*}
\int_\Gamma \Delta_\Gamma u\, v \,\d S =- \int_\Gamma \langle \nabla_\Gamma u, \nabla_\Gamma v\rangle_\Gamma \,\d S, \quad u\in H^2(\Gamma), \, v\in H^1(\Gamma), \label{sdt}
\end{equation*}
where $\langle \cdot, \cdot \rangle_\Gamma$ is the Riemannian inner product of tangential vectors on $\Gamma$. We refer to \cite{Jo'08, Ta'11} for a detailed exposition of PDEs on Riemannian manifolds.

The observability estimate for parabolic equations was initiated independently by \cite{LGLR} and \cite{FI'96} in the context of null controllability, based on Carleman inequalities (see also \cite{FG'06}). Furthermore, the observability estimate at one point of time, which is an estimate of the energy at a fixed time on the whole domain in terms of the energy at the same time but on a small subdomain, was already established for a linear heat equation with homogeneous Dirichlet boundary conditions by Phung in \cite{pkm}, using a new strategy combining the logarithmic convexity method and a Carleman commutator approach  (see \cite{ACM'21,Pa'75}). From the underlying observation at one time, many applications were derived as impulse approximate controllablity \cite{QSGW,pkdgwyx}, which consists on finding an impulsive control steering approximately the system from an arbitrary initial state to a final state in a finite interval of time. This type of control is  very weak since it only acts in a subdomain at one instant of time, which makes the problem of controllablity for the heat equation with impulse control very challenging. Unlike the interior controllability for impulsive systems that have been extensively studied in the literature, see for instance \cite{CDJUHLOC,ACCDGHL,GL,CDGHL,LHWZME} and the references therein, the problem of controllability with impulse controls has attracted less attention and not as many works are available in this area, we mention \cite{ABWZ,ka,pkdgwyx}.

Parabolic equations with dynamic boundary conditions have received a considerable attention in last years, see, e.g., \cite{FGGR'02, KMMR'19, VV'11}. Such equations arise in several areas of applications which include chemical reaction theory and heat transfer problems as well as populations dynamic \cite{FH'11,La'32}. We mention the papers \cite{Go'06, Sa'20}, which physically derive the dynamic boundary conditions. Furthermore, the controllability and inverse problems of heat equation with dynamic boundary conditions have recently been investigated in \cite{ACM1'21', ACMO'20,BCMO'20,MMS'17,KM'19}, where the authors have proven controllability and stability results by proving new Carleman estimates.

To the best of authors's knowledge, there is no paper which deals with heat equation with dynamic boundary conditions in the presence of an impulsive control. Motivated by the above facts, we study the approximate controllability for the impulsive heat equation with dynamic boundary conditions.

The key result that will enable us to prove the impulsive approximate controllability of the above system is the following observability estimate at one point of time.
\begin{theorem}\label{thm1.1}
Let $\Omega \subset \mathbb{R}^n$  be a bounded convex domain of class $C^2$, $\omega \Subset \Omega$. Let $\langle \cdot, \cdot\rangle$ denote the usual inner product in
$L^2(\Omega) \times L^2 (\Gamma)$ and let $\|\cdot \|$ be its corresponding norm. Then the following observation estimate holds
\begin{equation}\label{1.2}
\|U(\cdot, T)\| \leq\left(\mu \mathrm{e}^{\frac{K}{T}}\|u(\cdot, T)\|_{L^{2}(\omega)}\right)^{\beta}\|U(\cdot, 0)\|^{1-\beta},
\end{equation}
where $\mu, K >0$, $\beta \in (0,1)$ and $U=\left(u,u_{\Gamma}\right)$ is the solution of the following system
\begin{empheq}[left = \empheqlbrace]{alignat=2}\label{1.3}
\begin{aligned}
&\partial_{t} u-\Delta u=0, && \qquad\text { in } \Omega \times(0, T), \\
&\partial_{t}u_{\Gamma} - \Delta_{\Gamma} u_{\Gamma} + \partial_{\nu}u =0, && \qquad\text { on } \Gamma \times(0, T), \\
& u_{\Gamma}(x,t) = u_{|\Gamma}(x,t), &&\qquad\text{ on } \Gamma \times(0, T) , \\
& \left(u(\cdot, 0),u_{\Gamma}(\cdot, 0)\right)=\left(u^{0},u^{0}_{\Gamma}\right), && \qquad \text{ on } \Omega\times\Gamma.
\end{aligned}
\end{empheq}
\end{theorem}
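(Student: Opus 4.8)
The plan is to adapt Phung's logarithmic convexity strategy, implemented through a Carleman commutator, to the coupled bulk--boundary energy imposed by the dynamic boundary condition. Fix a center $x_0\in\omega$ and, for a parameter $h>0$, introduce the backward Gaussian weight
\[
\mathcal{G}(x,t)=\frac{1}{(T-t+h)^{n/2}}\,\exp\!\left(-\frac{|x-x_0|^2}{4(T-t+h)}\right),
\]
which solves $\partial_t\mathcal{G}+\Delta\mathcal{G}=0$. Since the state lives in $\mathbb{L}^2=L^2(\Omega)\times L^2(\Gamma)$, I would weight \emph{both} components and set
\[
H(t)=\int_\Omega u^2\mathcal{G}\,\d x+\int_\Gamma u_\Gamma^2\mathcal{G}\,\d S,\qquad D(t)=\int_\Omega|\nabla u|^2\mathcal{G}\,\d x+\int_\Gamma|\nabla_\Gamma u_\Gamma|^2\mathcal{G}\,\d S,
\]
and consider the frequency function $N(t)=D(t)/H(t)$.

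Next I would differentiate $H$ and insert \eqref{1.3}: the bulk contributes $\int_\Omega u\,\Delta u\,\mathcal{G}$ and the boundary contributes $\int_\Gamma u_\Gamma(\Delta_\Gamma u_\Gamma-\partial_\nu u)\mathcal{G}$. Integrating the bulk term by parts produces a trace term $\int_\Gamma u_\Gamma\,\partial_\nu u\,\mathcal{G}$ that cancels exactly against the coupling term coming from the boundary equation; this cancellation is precisely the dissipative structure of the dynamic boundary condition and yields $H'(t)=-2D(t)+R(t)$, where $R(t)$ collects the first-order terms carrying $\nabla\mathcal{G}$ and $\partial_t\mathcal{G}$. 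The Carleman commutator enters at the second differentiation (equivalently, in computing $N'$): writing the conjugated heat operator as the sum of its self-adjoint and skew-adjoint parts and computing their commutator produces a coercive quadratic form governed by the Hessian of the weight, together with boundary integrals.

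The heart of the argument---and the step I expect to be the main obstacle---is to prove that $N$ is almost monotone, i.e. $N'(t)\le C\,N(t)+C$. The novelty relative to the Dirichlet case is the boundary: integrating the Laplace--Beltrami term by parts against $\mathcal{G}$ and expanding the commutator on $\Gamma$ generates tangential Hessian terms $\nabla_\Gamma^2 u_\Gamma$ and, through a Bochner/Rellich type identity, curvature contributions involving the second fundamental form of $\Gamma$, as well as boundary integrals weighted by $\partial_\nu\mathcal{G}$. These terms are not a priori signed. Here the convexity of $\Omega$ is essential: it forces the second fundamental form to be nonnegative, so that the curvature boundary terms carry the favorable sign, while the radial nature of $\mathcal{G}$ and its concentration away from $\Gamma$ (recall $x_0\in\omega\Subset\Omega$) keep the $\partial_\nu\mathcal{G}$ integrals under control. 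Once these are absorbed, the almost monotonicity of $N$ follows and yields, as usual, the logarithmic convexity $\frac{\d^2}{\d t^2}\log H(t)\ge -C$.

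Finally, I would run the standard Phung argument: the almost monotonicity of $N$ gives, after integration in time, an interpolation inequality for the weighted energy. Since $\mathcal{G}(\cdot,T)$ is a Gaussian of width $\sqrt{h}$ concentrated at $x_0\in\omega$, the contributions of $\Omega\setminus\omega$ and of the whole boundary $\Gamma$ to the final weighted energy are bounded by $\mathrm{e}^{-c/h}\|U(\cdot,T)\|^2$; hence that term reduces to $\|u(\cdot,T)\|_{L^2(\omega)}$ up to an exponentially small remainder, while at $t=0$ the weighted energy is controlled by $\|U(\cdot,0)\|^2$. Balancing $h$ against $T$ and comparing the weighted norms with the plain norm on $\Omega\times\Gamma$, the normalization $(T-t+h)^{-n/2}$ produces the explicit blow-up $\mathrm{e}^{K/T}$, and one recovers \eqref{1.2} for suitable $\mu,K>0$ and $\beta\in(0,1)$.
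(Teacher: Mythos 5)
Your proposal follows the same overall strategy as the paper---conjugation by a backward-Gaussian weight centered at $x_0\in\omega$, symmetric/antisymmetric splitting, almost monotonicity of a frequency function via the Carleman commutator, then Phung-type interpolation with $h$ balanced against $T$---and your observation that the trace term from the bulk integration by parts cancels against the coupling term is correct (it is exactly how $-\partial_\nu$ sits inside the symmetric part $\mathcal{S}$ in the paper). However, you have deferred precisely the decisive step, the differential inequality $N'\le CN+C$, and the mechanism you offer for it does not hold up. First, your appeal to the ``concentration of $\mathcal{G}$ away from $\Gamma$'' to control the $\partial_\nu\mathcal{G}$-boundary integrals is unsound: in the frequency quotient those integrals are compared against boundary energies $\int_\Gamma|f_\Gamma|^2$, $\int_\Gamma|\nabla_\Gamma f_\Gamma|^2$ carrying the \emph{same} weight, so the smallness of the weight on $\Gamma$ cancels in the ratio; and since $u_\Gamma$ is a genuine unknown (not zero as in the Dirichlet case of \cite{pkm}), these boundary terms are of the same order as the boundary energy itself. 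Moreover, at times $t$ with $T-t+h\sim T$ the Gaussian has width $\sim\sqrt{T}$ and is not concentrated at all. Second, convexity alone does not close the estimate. It does deliver the signs the paper actually uses ($\partial_\nu\varphi<0$ on $\Gamma$ by Lemma \ref{lem2.1}, hence the favorable term \eqref{02} for $|\partial_\nu f|^2$, and a favorable curvature contribution inside the tangential Hessian, as you guessed), but the dynamic boundary condition also generates unsigned and cubic boundary terms in the commutator identity \eqref{9.1}---notably $\tfrac{s^3}{4\Upsilon^3}\int_\Gamma(\partial_\nu\varphi)^3|f_\Gamma|^2\,\mathrm{d}S$, the term in $\langle\nabla_\Gamma\Phi,\nabla_\Gamma\theta\rangle_\Gamma$, and the cross terms $(\Delta\varphi+\partial_\nu\varphi-\Delta_\Gamma\varphi)\partial_\nu f\,f_\Gamma$---which must be absorbed into $\frac{1+C_0}{\Upsilon}\langle-\mathcal{S}F,F\rangle+\frac{C}{h^2}\|F\|^2$ with a controlled constant $C_0$, as in \eqref{14.11}.

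This is exactly why the paper does not use the pure backward heat kernel: following \cite{RBKDP} it tempers the weight with a small parameter, $\Phi=s\varphi/\Upsilon$ with $s\in(0,1)$, and the smallness of $s$ is used quantitatively. For instance, the absorption \eqref{16} of the cubic boundary term requires $s$ small compared with $\min_\Gamma\lvert\partial_\nu\varphi\rvert>0$, and the outcome of Step 4 is $C_0=1-s^3\in(0,1)$, which feeds the choice of $M_\ell$ and the negativity of the exponent in Step 7. With your weight (effectively $s=1$, plus the harmless prefactor $(T-t+h)^{-n/2}$), these absorptions fail for a general convex $C^2$ domain, so your plan as written cannot be completed without reintroducing the tempering parameter (or an equivalent device) before computing the commutator and then carrying out the boundary bookkeeping of Steps 3--4 explicitly. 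A minor further point: the blow-up factor $\mathrm{e}^{K/T}$ does not come from the dimensional normalization $(T-t+h)^{-n/2}$; in the paper it arises from evaluating $s\varphi/\Upsilon$ at the three times $t_1,t_2,t_3$ near $T$ and optimizing over $h$ subject to $2\ell h<T$.
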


\begin{remark}
We emphasize that \eqref{1.2} is an observability inequality estimating the whole solution $U=(u,u_\Gamma)$ of the system \eqref{1.3} at final time $T$ by only using one internal observation on the first component $u$, which is localized in the subdomain $\omega$.
\end{remark}

Consequently, we obtain the main result which reads as follows
\begin{theorem}\label{thm1.2}
The system \eqref{1.1} is null approximate impulse controllable at any time $T > 0$. Moreover, for any $\varepsilon > 0,$ the cost of null approximate impulse control function at time $T$ satisfies $K(T, \varepsilon) \leq \frac{\mathcal{M}_{1} \mathrm{e}^{\frac{\mathcal{M}_{2}}{T-\tau}}}{\varepsilon^{\delta}},$ where the positive constants $\mathcal{M}_1$, $\mathcal{M}_2$ and $\delta$ are from the estimate \eqref{2.38.}.
\end{theorem}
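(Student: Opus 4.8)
The plan is to deduce Theorem~\ref{thm1.2} from the observation estimate at one point of time (Theorem~\ref{thm1.1}) by a duality argument adapted to the impulsive setting. Let $\{S(t)\}_{t\ge 0}$ denote the $C_0$-semigroup on $\mathbb{L}^2$ generated by the dynamic-boundary Laplacian governing \eqref{1.3}; recall that this semigroup is self-adjoint and contractive. Write $Bh=(\mathds{1}_\omega h,0)$ for the impulse operator, so that $B^\ast=B$ acts as the restriction of the first component to $\omega$. Solving \eqref{1.1} piecewise, the free evolution on $(0,\tau)$ produces the pre-impulse state $Y_0:=S(\tau)(\psi^0,\psi^0_\Gamma)$, the impulse replaces it by $Y_0+Bh$, and the free evolution on $(\tau,T)$ yields $\Psi(\cdot,T)=S(T-\tau)\big(Y_0+Bh\big)$. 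Hence the whole problem reduces to choosing $h\in L^2(\omega)$ so that $\|S(T-\tau)(Y_0+Bh)\|\le\varepsilon$ together with a quantitative bound on $\|h\|_{L^2(\omega)}$; this is a control problem on $[\tau,T]$ in which the impulse plays the role of a modification of the initial datum inside $\omega$.

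The core of the proof is a penalized dual functional. On the space of final adjoint states $z\in\mathbb{L}^2$ I would set
\[
F_\varepsilon(z)=\tfrac12\big\|\mathds{1}_\omega\,(S(T-\tau)z)_1\big\|_{L^2(\omega)}^2+\varepsilon\,\|z\|+\big\langle S(T-\tau)Y_0,\,z\big\rangle,
\]
where $(\cdot)_1$ denotes the first component and self-adjointness is used to identify $B^\ast S(T-\tau)z=\mathds{1}_\omega(S(T-\tau)z)_1$. The functional $F_\varepsilon$ is continuous and convex, and the decisive point is its coercivity. To check it, normalize any sequence with $\|z_k\|\to\infty$ by $w_k=z_k/\|z_k\|$: if the observation term $\|\mathds{1}_\omega(S(T-\tau)w_k)_1\|_{L^2(\omega)}$ stays bounded away from $0$ the quadratic term forces $F_\varepsilon(z_k)\to+\infty$, while if it tends to $0$ then \eqref{1.2} applied on $[\tau,T]$ (with $T$ replaced by $T-\tau$) to the solution issued from $w_k$ gives $\|S(T-\tau)w_k\|\to0$, whence $\langle S(T-\tau)Y_0,w_k\rangle=\langle Y_0,S(T-\tau)w_k\rangle\to0$ and the linear term becomes negligible against $\varepsilon\|z_k\|$. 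In both cases $F_\varepsilon(z_k)\to+\infty$, so $F_\varepsilon$ attains its minimum at some $\hat z$. Writing the (sub)differential optimality condition, which requires some care because of the nonsmooth term $\varepsilon\|z\|$ at the origin, and setting $h:=\mathds{1}_\omega(S(T-\tau)\hat z)_1$, one obtains $S(T-\tau)(Y_0+Bh)=-\varepsilon\,\hat z/\|\hat z\|$ when $\hat z\neq0$, so that $\|\Psi(\cdot,T)\|=\varepsilon$ (and $h=0$ already works when $\hat z=0$). This establishes the null approximate impulse controllability.

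It remains to extract the cost. From $F_\varepsilon(\hat z)\le F_\varepsilon(0)=0$ one gets $\tfrac12\|h\|_{L^2(\omega)}^2+\varepsilon\|\hat z\|\le |\langle Y_0,S(T-\tau)\hat z\rangle|\le\|Y_0\|\,\|S(T-\tau)\hat z\|$. Applying the interpolation inequality \eqref{1.2} on $[\tau,T]$ to $S(T-\tau)\hat z$, whose first-component observation on $\omega$ is precisely $\|h\|_{L^2(\omega)}$, yields $\|S(T-\tau)\hat z\|\le\big(\mu\mathrm{e}^{K/(T-\tau)}\|h\|_{L^2(\omega)}\big)^{\beta}\|\hat z\|^{1-\beta}$. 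Inserting this into the two terms of the previous inequality, first bounding $\|\hat z\|$ from the $\varepsilon\|\hat z\|$ term and then substituting back into the $\tfrac12\|h\|^2$ term, a Young-type bookkeeping of the powers of $\|h\|$ and $\|\hat z\|$ collapses to an inequality of the form $\tfrac12\|h\|_{L^2(\omega)}^2\le C\|h\|_{L^2(\omega)}$. Using finally the contraction property $\|Y_0\|\le\|(\psi^0,\psi^0_\Gamma)\|$, this gives a bound of the announced type with $\delta=(1-\beta)/\beta$ and the exponential factor $\mathrm{e}^{\mathcal{M}_2/(T-\tau)}$ inherited from \eqref{1.2}.

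I expect the main obstacle to be the coercivity step: it is exactly the place where the logarithmic-convexity observation estimate \eqref{1.2} is essential, since it is the only available form of quantitative unique continuation for the coupled interior–boundary dynamics. A secondary technical difficulty is the exponent arithmetic in the Young inequality used to isolate $\|h\|_{L^2(\omega)}$, which must be carried out so that the final power of $\varepsilon$ and the dependence $\mathrm{e}^{\mathcal{M}_2/(T-\tau)}$ come out precisely as in the claimed cost $K(T,\varepsilon)\le \mathcal{M}_1\mathrm{e}^{\mathcal{M}_2/(T-\tau)}/\varepsilon^{\delta}$.
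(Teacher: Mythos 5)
Your proposal is correct, but it implements the penalized duality in a genuinely different way from the paper. The paper minimizes, over adjoint initial data $\vartheta^0$, the \emph{smooth} functional $J_{\varepsilon}(\vartheta^{0})=\frac{\kappa^{2}}{2}\|\upsilon(\cdot,T-\tau)\|_{L^{2}(\omega)}^{2}+\frac{\varepsilon^{2}}{2}\|\vartheta^{0}\|^{2}+\langle \Psi^{0},\vartheta(\cdot,T)\rangle$ (the printed $\|\vartheta^0\|$ is a typo, as the Euler--Lagrange identity \eqref{4.39} shows), with the observability constant $\kappa=\mathcal{M}_{1}\mathrm{e}^{\mathcal{M}_{2}/(T-\tau)}/\varepsilon^{\delta}$ built in from the start: coercivity is then immediate from the quadratic penalization and needs no unique continuation, the optimality condition is a genuine derivative rather than a subdifferential inclusion, and the duality identity that you obtain by semigroup algebra, $\Psi(\cdot,T)=S(T-\tau)\bigl(S(\tau)\Psi^{0}+Bh\bigr)$ plus self-adjointness, is derived there by two integrations by parts over $(0,\tau)$ and $(\tau,T)$ across the jump. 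Taking $h=\kappa^{2}\tilde{\upsilon}(\cdot,T-\tau)$, the paper gets $\Psi(\cdot,T)=-\varepsilon^{2}\tilde{\vartheta}^{0}$ and, combining \eqref{4.39} with energy decay and the $\varepsilon$-interpolated form of \eqref{1.2} (Lemma \ref{lemma3.1} applied with horizon $T-\tau$), the single inequality $\kappa^{-2}\|h\|_{L^{2}(\omega)}^{2}+\varepsilon^{-2}\|\Psi(\cdot,T)\|^{2}\leq\|\Psi^{0}\|^{2}$, which delivers the final-state bound and the cost simultaneously. Your route is instead the classical Lions/Fabre--Puel--Zuazua scheme with the nonsmooth term $\varepsilon\|z\|$: you pay with the subdifferential analysis at $\hat z=0$ and a two-regime coercivity argument in which \eqref{1.2} serves as quantitative unique continuation, and you then recover the cost by the weighted Young inequality with exponents $1/\beta$ and $1/(1-\beta)$ — precisely the bookkeeping the paper has outsourced to Lemma \ref{lemma3.1}; carried out, it gives $\frac{1}{2}\|h\|_{L^{2}(\omega)}^{2}\leq \beta\,\mu\,\mathrm{e}^{K/(T-\tau)}\,\|\Psi^{0}\|^{1/\beta}\bigl((1-\beta)/\varepsilon\bigr)^{(1-\beta)/\beta}\|h\|_{L^{2}(\omega)}$, i.e.\ exactly $\delta=(1-\beta)/\beta$ and the same exponential factor (note $\mathcal{M}_{2}=\mathcal{K}_{2}/\beta=K$, since $\mathcal{K}_{2}=\beta K$ once the constant in \eqref{1.2} is pulled out of the $\beta$-power). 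What your version buys is a final state of norm exactly $\varepsilon$ (after normalization) and no need for Lemma \ref{lemma3.1} as a separate statement; what the paper's version buys is the avoidance of all nonsmooth analysis and of the coercivity discussion. Two details to tighten: the definition requires $\|\Psi(\cdot,T)\|\leq\varepsilon\|\Psi^{0}\|$, so either normalize $\|\Psi^{0}\|=1$ (as the cost definition does) and scale by linearity, or carry $\|\Psi^{0}\|$ through; and $B^{*}\neq B$ literally, since $B\colon L^{2}(\omega)\to\mathbb{L}^{2}$ while $B^{*}(u,u_{\Gamma})=u_{|\omega}$, although this is exactly the adjoint you use.
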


This paper is organized as follows. In Section 2, we recall some preliminary results needed in the sequel. In Section 3, we first introduce the weight function; then we present the strategy to obtain the observation estimate at one point of time by a Carleman commutator approach. Section 4 is devoted to the proof of the impulsive approximate controllability for the impulsive heat equation with dynamic boundary conditions.

\section{Preliminaries}
In this section, we recall some results that will be useful in the sequel. We will often use the following real space
$$\mathbb{L}^2:=L^2(\Omega, \d x)\times L^2(\Gamma, \d S),$$
which is a Hilbert space equipped with the inner product given by
\begin{align*}
\langle (u,u_\Gamma),(v,v_\Gamma)\rangle =\langle u,v\rangle_{L^2(\Omega)} +\langle u_\Gamma,v_\Gamma\rangle_{L^2(\Gamma)},
\end{align*}
where the Lebesgue measure on $\Omega$ is denoted by $\d x$ and the surface measure on $\Gamma$ by $\d S$. We also consider the space
$$\mathbb{H}^k:=\left\{(u,u_\Gamma)\in H^k(\Omega)\times H^k(\Gamma)\colon u_{|\Gamma} =u_\Gamma \right\} \text{ for } k=1,2,$$ equipped with the standard product norm.

System \eqref{1.3} can be written as an abstract Cauchy problem
\begin{equation}\label{acp}
\begin{cases}
\hspace{-0.1cm} \partial_t \mathbf{U}=\mathbf{A} \mathbf{U}, \quad 0<t \le T, \nonumber\\
\hspace{-0.1cm} \mathbf{U}(0)=(u^0, u^0_\Gamma), \nonumber
\end{cases}
\end{equation}
where $\mathbf{U}:=(u,u_{\Gamma})$ and the linear operator $\mathbf{A} \colon D(\mathbf{A}) \subset \mathbb{L}^2 \longrightarrow \mathbb{L}^2$ is given by
\begin{equation*}
\mathbf{A}=\begin{pmatrix} \Delta & 0\\ -\partial_\nu & \Delta_\Gamma \end{pmatrix}, \qquad \qquad D(\mathbf{A})=\mathbb{H}^2. \label{E21}
\end{equation*}
The operator $\mathbf{A}$ is selfadjoint and dissipative, it generates then an analytic $C_0$-semigroup of contractions $\left(\mathrm{e}^{t\mathbf{A}}\right)_{t\geq 0}$ on $\mathbb{L}^2$ (see \cite{MMS'17}). Hence, the solution map $t\mapsto \mathrm{e}^{t\mathbf{A}} \mathbf{U}_0$ is infinitely many times differentiable for $t>0$ and $\mathrm{e}^{t\mathbf{A}} \mathbf{U}_0 \in D\left(\mathbf{A}^m\right)$ for every $\mathbf{U}_0 \in \mathbb{L}^2$ and $m\in \mathbb{N}$.

On the other hand, we rewrite system \eqref{1.1} as the impulsive Cauchy problem
\begin{equation}\label{ACP}
\text{(ACP)} \;\; \begin{cases}
\hspace{-0.1cm} \partial_t \Psi(t)=\mathbf{A} \Psi(t), \quad (0,T)\setminus\{\tau\}, \nonumber\\
\hspace{-0.1cm} \bigtriangleup \Psi(\tau) = (\mathds{1}_{\omega} h(\tau), 0) ,\nonumber\\
\hspace{-0.1cm} \Psi(0)=(\psi^0, \psi^0_\Gamma), \nonumber
\end{cases}
\end{equation}
where $\Psi:=(\psi,\psi_{\Gamma})$ and $\bigtriangleup \Psi(\tau) :=\Psi\left(\cdot,\tau\right) - \Psi\left(\cdot,\tau^{-}\right)$.
For all $\Psi_{0}:=(\psi^0, \psi^0_\Gamma) \in \mathbb{L}^2$, the system (ACP) has a unique mild solution (see \cite{VLDDPSS, VSRKGJSPM}) given by
$$
\Psi(t) = \mathrm{e}^{t\mathbf{A}} \Psi_{0} + \mathds{1}_{\{t\geq \tau \}}(t)\, \mathrm{e}^{(t-\tau)\mathbf{A}} (\mathds{1}_{\omega} h(\tau),0), \qquad t\in (0,T).
$$

To achieve the aim of this paper, we  need the following lemma.
\begin{lemma}\label{lem2.1}
Let $\Omega \subset \mathbb{R}^n$ be a bounded convex domain of class $C^1$ and $x_0 \in \Omega$.
\begin{itemize}
\item[(i)] There exists a convex function $\theta:\mathbb R^n\rightarrow \mathbb R$ of class $C^1$ such that
\begin{align*}
\Omega &=\left\{x\in \mathbb R^n, \theta(x)<0\right\},\\
\partial \Omega &=\left\{x\in \mathbb R^n, \theta(x)=0\right\},\\
\nabla&\theta(x) \not=0 \quad \text{ for all } x\in \Gamma.
\end{align*}
\item[(ii)] For all $x\in \Gamma$,
\begin{equation*}
-(x-x_0)\cdot \nu(x)<0. \label{nd}
\end{equation*}
\end{itemize}
\end{lemma}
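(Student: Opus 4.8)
The two assertions are best proved in the reverse of the order stated, since part (ii) is the geometric fact that drives the construction in part (i). The plan is to first establish the separation inequality in (ii) from the supporting hyperplane property of convex sets, and then to build the convex defining function $\theta$ in (i) as a normalization of the Minkowski gauge of $\Omega$ centred at the interior point $x_0$.

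For (ii), I would use that a convex domain lies on one side of each of its tangent hyperplanes. Since $\partial\Omega$ is $C^1$, at every $x \in \Gamma$ the tangent hyperplane is the unique supporting hyperplane and its outward normal is exactly $\nu(x)$, so convexity gives $(y - x)\cdot\nu(x) \le 0$ for all $y \in \overline{\Omega}$. Because $x_0$ lies in the open set $\Omega$, there is $r > 0$ with $B(x_0, r) \subset \Omega$; testing the inequality with $y = x_0 + s\,\nu(x)$ for $0 < s < r$ yields $(x_0 - x)\cdot\nu(x) \le -s < 0$, that is $-(x - x_0)\cdot\nu(x) < 0$, which is (ii).

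For (i), after translating so that $x_0 = 0$, I would introduce the gauge $\mu_\Omega(x) = \inf\{t > 0 : x/t \in \Omega\}$. As $\Omega$ is a bounded convex neighbourhood of the origin, $\mu_\Omega$ is finite, sublinear (hence convex), positively homogeneous of degree one, and satisfies $\Omega = \{\mu_\Omega < 1\}$ and $\Gamma = \{\mu_\Omega = 1\}$, together with the two-sided bound $c|x| \le \mu_\Omega(x) \le C|x|$. The candidate defining function is then
\[
\theta(x) := \mu_\Omega(x)^2 - 1,
\]
which is convex as the square of a nonnegative convex function and clearly cuts out $\Omega$ and $\Gamma$ at the level $0$.

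The two points requiring care --- and the main obstacle --- are the $C^1$ regularity of $\theta$ and the nonvanishing of its gradient on $\Gamma$. Away from the origin I would obtain $\mu_\Omega \in C^1$ by the implicit function theorem: writing $\partial\Omega$ locally as $\{g = 0\}$ with $\nabla g \parallel \nu$, the equation $g(x/t) = 0$ can be solved for $t = \mu_\Omega(x)$ because the transversality $\nabla g(y)\cdot y = |\nabla g(y)|\,(\nu(y)\cdot y) > 0$ is guaranteed precisely by (ii) with $x_0 = 0$. At the origin, homogeneity makes $\nabla(\mu_\Omega^2) = 2\mu_\Omega\,\nabla\mu_\Omega$; since $\nabla\mu_\Omega$ is homogeneous of degree zero (hence bounded) while $\mu_\Omega(x) \to 0$, this gradient tends to $0$, so $\theta \in C^1(\mathbb{R}^n)$ with $\nabla\theta(0) = 0$. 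Finally, on $\Gamma$ we have $\nabla\theta = 2\nabla\mu_\Omega$, and Euler's identity $x\cdot\nabla\mu_\Omega(x) = \mu_\Omega(x) = 1$ forces $\nabla\mu_\Omega \neq 0$ there, giving $\nabla\theta \neq 0$ on $\Gamma$. The delicate step is thus the passage from the $C^1$ regularity of $\partial\Omega$ to global $C^1$ regularity of $\theta$; the squaring is what removes the conical singularity of the gauge at the centre while keeping convexity intact.
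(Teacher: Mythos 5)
Your proof is correct, and although the centerpiece of part (i) --- the function $\theta=\mu_\Omega^2-1$ built from the Minkowski gauge, convex by monotone composition and $C^1$ at the centre because $\nabla(\mu_\Omega^2)=2\mu_\Omega\nabla\mu_\Omega\to 0$ --- is exactly the paper's construction ($j_\Omega^2-1$ there), your logical architecture is genuinely different. The paper proves (i) first and gets (ii) as a one-line corollary: with $\nu=\nabla\theta/|\nabla\theta|$ on $\Gamma$, the subgradient inequality gives $-(x-x_0)\cdot\nabla\theta(x)\le\theta(x_0)<0$. You reverse the order, proving (ii) directly from the supporting-hyperplane property plus an interior ball $B(x_0,r)\subset\Omega$ (testing with $y=x_0+s\,\nu(x)$ to obtain strictness), and then feeding (ii) with $x_0=0$ into the transversality condition $x\cdot\nabla g(x)>0$ of the implicit function theorem. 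This buys something real: the paper merely asserts that $C^1$ regularity of $\Gamma$ makes $j_\Omega$ of class $C^1$ on $\mathbb{R}^n\setminus\{0\}$ (via the parameterization of $\Gamma$ over the sphere), whereas your IFT step supplies the justification, and the transversality it requires is precisely the strict inequality of (ii) --- which is why your ordering is the natural one. Your proofs of $\nabla\theta\ne 0$ on $\Gamma$ also differ: the paper argues by contradiction from the subgradient inequality ($\nabla j_\Omega(x)=0$ would force $j_\Omega\ge 1$ everywhere, contradicting $j_\Omega(0)=0$), while your appeal to Euler's identity $x\cdot\nabla\mu_\Omega(x)=\mu_\Omega(x)=1$ is more direct. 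In short: same construction, but your dependency order is self-contained and fills a regularity gap the paper glosses over, at the modest cost of a longer proof of (ii).
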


\begin{proof}
$(i)$ For simplicity, we assume that $x_0=0$. The gauge function of $\Omega$,  defined by
$$
j_\Omega(x) =\inf \left\{t>0: \frac{x}{t} \in \Omega\right\},
$$
is convex positively homogeneous of degree $1$. We recall here some well-known properties of $j_\Omega$, see  \cite{Ho'94} 
\begin{itemize}
\item[(a)] $j_{\Omega}(0)=0,\;  j_{\Omega}(x)>0$, $\;\forall x \neq 0$,
\item[(b)] $j_{\Omega}(\lambda x)=\lambda j_{\Omega}(x)$, $\;\forall \lambda>0$, $\;\forall x \in \mathbb{R}^{N}$,
\item[(c)] $j_{\Omega}(x+y)\le j_{\Omega}(x)+ j_{\Omega}(y)$, $\;\forall x,y \in \mathbb{R}^n$,
\item[(d)] $j_\Omega$ is Lipschitz continuous,
\item[(e)] $\Omega=\left\{x \in \mathbb{R}^{N} : j_{\Omega}(x)<1\right\}$,
\item[(f)] $\Gamma = \left\{x \in \mathbb{R}^{N} : j_{\Omega}(x)=1\right\}$.
\end{itemize}
The boundary $\Gamma$ is parameterized by the function
\begin{align*}
 g: \mathbb S^{N-1} &\rightarrow \Gamma\\
  x &\mapsto \frac{x}{j_\Omega(x)}.
\end{align*}
Since $\Gamma$ is of class $C^1$, $j_\Omega$ is a $C^1$ function on $\mathbb{R}^n\setminus \{0\}$. Moreover, since $j_\Omega$ is Lipschitz continuous, $\nabla j_\Omega$ is bounded. Let $\theta = j_\Omega^2-1$. We have $\Omega= \{ \theta < 0\}$ and $\Gamma = \{ \theta = 0\}$. Moreover, $\theta$ is of class $C^1$ on $\mathbb R^n \setminus\{0\}$. On the other hand, we have $\nabla\theta = 2 j_\Omega\nabla j_\Omega$ and $\lim\limits_{x\to 0} \nabla\theta(x)=0$. Finally, if there exists $x\in \Gamma$ such that $\nabla \theta(x)=0$, then $\nabla j_\Omega(x)=0$. By the subgradient inequality, we obtain that
$$j_\Omega(y) \ge j_\Omega(x) + \nabla j_\Omega(x) (y-x) \ge 1$$
for all $y\in \mathbb{R}^n$, which is not the case.

$(ii)$ Let $x\in \Gamma$. Then $\theta(x)=0$, $\theta(x_0)<0$ and $\nu=\frac{\nabla \theta}{|\nabla \theta|}$. Since $\theta$ is convex, by the subgradient inequality, we obtain
\begin{align*}
-(x-x_0)\cdot\nabla \theta (x) \le \theta(x_0) < 0.
\end{align*}
Thus, the proof is complete.
\end{proof}
\bigskip

\section{Logarithmic convexity by the Carleman commutator approach}
In this section, we will present an approach to obtain the observation estimate at one point of time for system \eqref{1.3} in a convex bounded domain $ \Omega \subset \mathbb R^n$ with boundary $\Gamma = \partial \Omega$ of class $C^2$. We start by introducing the weight function needed in the proof. Following \cite{RBKDP}, we invoke a small parameter $s \in (0,1)$ that enables us to deal with some new boundary terms arising from integration by parts.

\subsection{The weight function}
Let $\Phi \colon \overline{\Omega} \times (0,T)\rightarrow \mathbb{R} $ be the weight function defined by
\[
\Phi (x,t)=\frac{-s\left\lvert x-x_{0}\right\lvert^2}{4(T-t+h)},
\]
where $x_0 \in \omega \Subset \Omega, h > 0$ and $s \in \left(0,1\right)$. To simplify, we set
\[
\varphi(x)=\frac{-\left\lvert x-x_{0}\right\lvert^2}{4}\quad \text{and}\quad \Upsilon(t)=T-t+ h,
\]
so that
$$\Phi (x,t)=\frac{s \varphi(x)}{\Upsilon(t)}.$$
The function $\varphi$ satisfies the following properties
\begin{itemize}
\item[(1)] $\varphi(x)+\left\lvert \nabla\varphi(x)\right\lvert^{2} = 0,\; \forall x\in \overline{\Omega}$,
\item[(2)] $\nabla\varphi(x)=-\dfrac{1}{2}(x-x_{0}), \;\forall x\in\overline{\Omega}$,
\item[(3)] $\Delta\varphi(x) =-\dfrac{n}{2},\; \forall x\in\overline{\Omega}$,
\item[(4)] $\nabla^2\varphi=-\dfrac{1}{2}I_{n}$,
\item[(5)] $\partial_{\nu}\varphi(x)=-\dfrac{1}{2}(x-x_{0})\cdot \nu(x),\; \forall x\in \Gamma$.
\end{itemize}
\subsection{Proof of the logarithmic convexity estimate}
We shall present the strategy of the proof step by step following some ideas in \cite{pkm} in a modified form.

\noindent\textbf{Step 1. Symmetric part and antisymmetric part.} Let $\left(u^{0}, u_{\Gamma}^{0}\right) \in \mathbb{L}^{2} \backslash\{(0,0)\}$. We define the quantity 
\begin{equation*}
F(x, t)=U(x, t) \mathrm{e}^{\Phi(x, t) / 2},
\end{equation*}
where $U=\left(\begin{array}{c}u \\ u_{\Gamma}\end{array}\right)$
is the solution of \eqref{1.3} and $ F =\begin{pmatrix}
f \\ f_{\Gamma}
\end{pmatrix}$. We introduce the operator $P$ as follows
\begin{equation*}
P F = \mathrm{e}^{\Phi / 2}
\begin{pmatrix}
\partial_{t}-\Delta & 0 \\[3mm]
\partial_{\nu} & \partial_{t}-\Delta_{\Gamma}
\end{pmatrix} \mathrm{e}^{-\Phi / 2} F.
\end{equation*}
Then
\begin{equation*}
P F =
\begin{pmatrix}
\partial_{t} f-\Delta f-\frac{1}{2}f\left(\partial_{t} \Phi+\frac{1}{2}|\nabla \Phi|^{2}\right)+ \nabla \Phi \cdot\nabla f +\frac{1}{2}\Delta \Phi f \\[3mm]
\partial_{t} f_{\Gamma}-\Delta_{\Gamma} f_{\Gamma}+ \partial_{\nu} f -\frac{1}{2} f_{\Gamma}\left(\partial_{t} \Phi+\frac{1}{2}|\nabla_{\Gamma} \Phi|^{2}\right)-\frac{1}{2} \partial_{\nu}\Phi f_{\Gamma} +\frac{1}{2}\Delta_{\Gamma} \Phi f_{\Gamma} + \langle \nabla_{\Gamma} \Phi ,\nabla_{\Gamma} f_{\Gamma}\rangle_{\Gamma}
\end{pmatrix}.
\end{equation*}
Let us define $P_{1}$ as follows
\begin{equation*}
P_{1} F =
\begin{pmatrix}
\Delta f+\frac{1}{2}f\left(\partial_{t} \Phi+\frac{1}{2}|\nabla \Phi|^{2}\right)- \nabla \Phi\cdot \nabla f -\frac{1}{2}\Delta \Phi f \\[3mm]
\Delta_{\Gamma} f_{\Gamma}- \partial_{\nu} f +\frac{1}{2} f_{\Gamma}\left(\partial_{t} \Phi+\frac{1}{2}|\nabla_{\Gamma} \Phi|^{2}\right)+\frac{1}{2} \partial_{\nu}\Phi f_{\Gamma} -\frac{1}{2}\Delta_{\Gamma} \Phi f_{\Gamma} - \langle \nabla_{
\Gamma} \Phi ,\nabla_{\Gamma} f_{\Gamma}\rangle_{\Gamma}
\end{pmatrix}.
\end{equation*}
Since $U$ is the solution of \eqref{1.3}, then $P F =0$. Hence
\begin{equation*}
\begin{pmatrix}
\partial_{t} f \\
\partial_{t} f_{\Gamma}
\end{pmatrix} = P_{1} F.
\end{equation*}
Let us compute the adjoint operator of $P_1$. For any $G = \begin{pmatrix}
g \\
g_{\Gamma}
\end{pmatrix} \in  \mathbb{H}^1$, we have
\begin{align*}
\left\langle P_1 F , G \right\rangle &=\displaystyle\int_{\Omega} \Delta f g \mathrm{d} x+\int_{\Omega} \frac{1}{2} f g\left(\partial_{t} \Phi+\frac{1}{2}|\nabla \Phi|^{2}\right) \mathrm{d}x- \int_{\Omega} g \nabla \Phi \cdot \nabla f \mathrm{d} x\\
&-\displaystyle\frac{1}{2} \int_{\Omega}  \Delta \Phi fg \mathrm{d}x + \int_{\Gamma}  \Delta_{\Gamma} f_{\Gamma}g_{\Gamma} \mathrm{d} S - \int_{\Gamma}  \partial_{\nu} f g_{\Gamma} \mathrm{d} S\\
&+\displaystyle \frac{1}{2} \int_{\Gamma}  f_{\Gamma} g_{\Gamma}\left(\partial_{t} \Phi+\frac{1}{2}|\nabla_{\Gamma} \Phi|^{2}\right)\mathrm{d}S+\frac{1}{2} \int_{\Gamma}  \partial_{\nu}\Phi f_{\Gamma} g_{\Gamma} \mathrm{d} S\\
 &-\displaystyle \frac{1}{2} \int_{\Gamma} \Delta_{\Gamma} \Phi f_{\Gamma} g_{\Gamma} \mathrm{d} S - \int_{\Gamma} g_{\Gamma}\left\langle\nabla_{\Gamma} \Phi, \nabla f_{\Gamma}\right\rangle_{\Gamma} \mathrm{d} S\\
&= \displaystyle\int_{\Gamma} \partial_{\nu} f g_{\Gamma} \mathrm{d} S - \int_{\Omega} \nabla f  \cdot\nabla g \mathrm{d} x  +\frac{1}{2} \int_{\Omega}  f g\left(\partial_{t} \Phi+\frac{1}{2}|\nabla \Phi|^{2}\right) \mathrm{d}x\\
&-\displaystyle\int_{\Gamma}  \partial_{\nu}\Phi f_{\Gamma} g_{\Gamma} \mathrm{d} S+\int_{\Omega} f\, \dv(\nabla \Phi g )\mathrm{d}x - \frac{1}{2} \int_{\Omega} \Delta \Phi f g \mathrm{d}x \\
&-\displaystyle \int_{\Gamma} \left\langle\nabla_{\Gamma}f_{\Gamma} , \nabla_{\Gamma} g_{\Gamma}\right\rangle_{\Gamma} \mathrm{d} S - \int_{\Gamma} \partial_{\nu} f g_{\Gamma} \mathrm{d}S+\frac{1}{2} \int_{\Gamma}  f_{\Gamma} g_{\Gamma}\left(\partial_{t} \Phi+\frac{1}{2}|\nabla_{\Gamma} \Phi|^{2}\right) \mathrm{d}S\\
&+ \displaystyle \frac{1}{2} \int_{\Gamma}  \partial_{\nu}\Phi f_{\Gamma} g_{\Gamma} \mathrm{d} S -\frac{1}{2} \int_{\Gamma} \Delta_{\Gamma} \Phi f_{\Gamma} g_{\Gamma} \mathrm{d} S +\int_{\Gamma} f_{\Gamma} div_{\Gamma} (g_{\Gamma}\nabla_{\Gamma} \Phi) \mathrm{d} S\\
&= - \displaystyle\int_{\Gamma}  f_{\Gamma} \partial_{\nu} g \mathrm{d} S+ \int_{\Omega}  f  \Delta g \mathrm{d} x  +\frac{1}{2} \int_{\Omega}  f g\left(\partial_{t} \Phi+\frac{1}{2}|\nabla \Phi|^{2}\right) \mathrm{d}x\\
&-\displaystyle \frac{1}{2}\int_{\Gamma}  \partial_{\nu}\Phi f_{\Gamma} g_{\Gamma} \mathrm{d} S+\int_{\Omega} \Delta \Phi f g \mathrm{d}x + \int_{\Omega} f \nabla \Phi\cdot \nabla g \mathrm{d}x -\frac{1}{2}\int_{\Omega} \Delta \Phi f g \mathrm{d}x\\
&+\displaystyle   \int_{\Gamma} f_{\Gamma} \Delta_{\Gamma} g_{\Gamma} \mathrm{d} S+\frac{1}{2} \int_{\Gamma}  f_{\Gamma} g_{\Gamma}\left(\partial_{t} \Phi+\frac{1}{2}|\nabla_{\Gamma} \Phi|^{2}\right) \mathrm{d}S\\
&- \displaystyle  \frac{1}{2} \int_{\Gamma}  \Delta_{\Gamma}\Phi f_{\Gamma} g_{\Gamma} \mathrm{d} S+\int_{\Gamma} f_{\Gamma}\left\langle\nabla_{\Gamma} \Phi_{\Gamma} , \nabla_{\Gamma} g_{\Gamma}\right\rangle_{\Gamma} \mathrm{d} S+ \int_{\Gamma} \Delta_{\Gamma} \Phi f_{\Gamma} g_{\Gamma} \mathrm{d} S\\
&= - \displaystyle\int_{\Gamma}  f_{\Gamma} \partial_{\nu} g \mathrm{d} S + \int_{\Omega}  f  \Delta g \mathrm{d} x  +\frac{1}{2} \int_{\Omega}  f g\left(\partial_{t} \Phi+\frac{1}{2}|\nabla \Phi|^{2}\right) \mathrm{d}x \\
&-\displaystyle \frac{1}{2}\int_{\Gamma}  \partial_{\nu}\Phi f_{\Gamma} g_{\Gamma} \mathrm{d} S+\frac{1}{2} \int_{\Omega} \Delta \Phi f g \mathrm{d}x + \int_{\Omega} f \nabla \Phi\cdot \nabla g \mathrm{d}x \\
&+ \displaystyle\int_{\Gamma} f_{\Gamma} \Delta_{\Gamma} g_{\Gamma} \mathrm{d} S
+ \frac{1}{2} \int_{\Gamma}  f_{\Gamma} g_{\Gamma}\left(\partial_{t} \Phi+\frac{1}{2}|\nabla_{\Gamma} \Phi|^{2}\right) \mathrm{d} S \\
&+\displaystyle \frac{1}{2} \int_{\Gamma}  \Delta_{\Gamma}\Phi f_{\Gamma} g_{\Gamma} \mathrm{d} S+\int_{\Gamma} f_{\Gamma}\left\langle\nabla_{\Gamma}\Phi_{\Gamma} , \nabla_{\Gamma} g_{\Gamma}\right\rangle_{\Gamma} \mathrm{d} S\\
&= \left\langle F, P_{1}^{*} G \right\rangle.
\end{align*}
Next, we introduce the following operators
\begin{equation*}
\mathcal{A} = \frac{P_1 - P_{1}^{*}}{2} = \begin{pmatrix}
-\nabla \Phi\cdot \nabla -\frac{1}{2}\Delta \Phi & 0 \\[3mm] 
0 & \frac{1}{2} \partial_{\nu}\Phi - \left\langle \nabla_{\Gamma} \Phi, \nabla_{\Gamma} \right\rangle_{\Gamma} - \frac{1}{2} \Delta_{\Gamma}\Phi
\end{pmatrix},
\end{equation*}
which is antisymmetric on $\mathbb{H}^1$,  and 
\begin{equation*}
\mathcal{S} = \frac{P_1 + P_{1}^{*}}{2} = \begin{pmatrix}
\Delta + \eta & 0 \\[3mm]
- \partial_{\nu} & \Delta_{\Gamma}+ \theta
\end{pmatrix},\\
\end{equation*}
that is symmetric on $\mathbb{H}^1$, where
\begin{align*}
\eta &= \frac{1}{2} \left( \partial_{t}\Phi + \frac{1}{2}\left\lvert\nabla \Phi \right\lvert^{2}\right), \\
\theta &= \frac{1}{2} \left( \partial_{t}\Phi + \frac{1}{2}\left\lvert\nabla_{\Gamma} \Phi \right\lvert^{2}\right).
\end{align*}
Thus
\begin{equation*}
\partial_{t} F = \mathcal{S} F + \mathcal{A} F.
\end{equation*}
\smallskip

\noindent\textbf{Step 2. Energy estimates.} Multiplying the above equation by $F$, we obtain
\begin{equation*}
\frac{1}{2} \partial_{t} \|F\|^{2} - \left\langle \mathcal{S}F, F \right\rangle =0.
\end{equation*}
Introduce the frequency function $\displaystyle 
\mathcal{N}=\frac{\langle-\mathcal{S} F, F\rangle}{\|F\|^{2}}.$ Then
\begin{equation*}
\frac{1}{2} \partial_{t} \|F\|^{2} + \mathcal{N} \|F\|^{2}  =0.
\end{equation*}
We prove here that the derivative of $\mathcal{N}$ satisfies 
\begin{equation*}
	\begin{array}[c]{ll}
		\dfrac{d}{d t} \mathcal{N} & \leq \dfrac{1}{\|F\|^{2}}\bigg( \left\langle-\left(\mathcal{S}^{\prime}+[\mathcal{S}, \mathcal{A}]\right) F, F\right\rangle-\displaystyle \int_{\Gamma} \partial_{\nu}f\left( \mathcal{A}_{1} f\right)_{\Gamma} \mathrm{d} S +\int_{\Gamma} \partial_{\nu} f  \mathcal{A}_{2} f_{\Gamma} \mathrm{d} S \\
		&\quad + \displaystyle\int_{\Gamma} \partial_{\nu} \Phi f_{\Gamma}  \left(\mathcal{S}_{1} f\right)_{\Gamma} \mathrm{d}S - \int_{\Gamma} \partial_{\nu} \Phi f_{\Gamma}  \mathcal{S}_{2} F \mathrm{d} S \bigg), 
	\end{array}
\end{equation*}
where
$ \mathcal{S}F =\left(\begin{array}{l} \mathcal{S}_{1} f \\  \mathcal{S}_{2}\left( f, f_{\Gamma}\right)\end{array}\right)$, $ \mathcal{A}F =\left(\begin{array}{l} \mathcal{A}_{1} f \\  \mathcal{A}_{2} f_{\Gamma}\end{array}\right)$ and $\left\langle [\mathcal{S}, \mathcal{A}] F, F\right\rangle = \langle\mathcal{S}  \mathcal{A} F, F\rangle-\langle\mathcal{A} \mathcal{S} F, F\rangle$. \\
Indeed, we have 
\begin{align}\label{2.17}
\frac{\d}{\d t} \mathcal{N} &=\frac{1}{\|F\|^{4}}\left(\frac{\d}{\d t}\langle-\mathcal{S} F, F\rangle\|F\|^{2}+\langle\mathcal{S} F, F\rangle \frac{\d}{\d t}\|F\|^{2}\right)\nonumber \\
&=\frac{1}{\|F\|^{2}}\left[\left\langle-\mathcal{S}^{\prime} F, F\right\rangle-2\left\langle\mathcal{S} F, F^{\prime}\right\rangle\right]+\frac{2}{\|F\|^{4}}\langle\mathcal{S} F, F\rangle^{2}\nonumber \\
&=\frac{1}{\|F\|^{2}}\left[\left\langle-\mathcal{S}^{\prime} F, F\right\rangle-2\langle\mathcal{S} F, \mathcal{A} F\rangle\right]+\frac{2}{\|F\|^{4}}\left[-\|\mathcal{S} F\|^{2}\|F\|^{2}+\langle\mathcal{S} F, F\rangle^{2}\right]\nonumber\\
&\leq \frac{1}{\|F\|^{2}}\left[\left\langle-\mathcal{S}^{\prime} F, F\right\rangle-2\langle\mathcal{S} F, \mathcal{A} F\rangle\right].
\end{align}
On the other hand, we have
\[
\begin{array}[c]{ll}
\langle\mathcal{S} F, \mathcal{A} F\rangle &= \displaystyle\int_{\Omega} \left( \Delta f + \eta f\right) \mathcal{A}_{1}f \mathrm{d} x + \int_{\Gamma} \left( - \partial_{\nu} f + \Delta_{\Gamma} f_{\Gamma}+\theta f_{\Gamma}\right) \mathcal{A}_{2}f_{\Gamma}\mathrm{d}S\\
&=\displaystyle \int_{\Gamma} \partial_{\nu} f \left(\mathcal{A}_{1}f\right)_{\Gamma} \mathrm{d} S - \int_{\Omega} \nabla f \cdot\nabla\left(\mathcal{A}_{1}f\right) \mathrm{d}x + \int_{\Omega}\eta f \mathcal{A}_{1}f \d x-\displaystyle\int_{\Gamma} \partial_{\nu} f \mathcal{A}_{2} f_{\Gamma} \mathrm{d} S \\
& -\displaystyle\int_{\Gamma} \left\langle\nabla_{\Gamma}f_{\Gamma} , \nabla_{\Gamma}\mathcal{A}_{2} f_{\Gamma}\right\rangle \mathrm{d} S + \int_{\Gamma}\theta f_{\Gamma} \mathcal{A}_{2}f_{\Gamma} \mathrm{d} S\\
&=\displaystyle\int_{\Gamma} \partial_{\nu} f \left(\mathcal{A}_{1}f\right)_{\Gamma} \mathrm{d} S - \int_{\Gamma} f_{\Gamma} \partial_{\nu}\left(\mathcal{A}_{1}f\right) \mathrm{d}S+\int_{\Omega} f \Delta\left(\mathcal{A}_{1}f\right) \mathrm{d}x + \int_{\Omega}\eta f \mathcal{A}_{1}f \mathrm{d}x \\
&-\displaystyle\int_{\Gamma} \partial_{\nu} f \mathcal{A}_{2} f_{\Gamma} \mathrm{d} S + \int_{\Gamma} f_{\Gamma} \Delta_{\Gamma}\left(\mathcal{A}_{2} f_{\Gamma}\right) \mathrm{d} S + \int_{\Gamma}\theta f_{\Gamma} \mathcal{A}_{2}f_{\Gamma} \mathrm{d}\\
&= \displaystyle\int_{\Omega}\left( \Delta + \eta \right)  \left(\mathcal{A}_{1}f\right) f \mathrm{d}x + \int_{\Gamma} \left( -\partial_{\nu} \left( \mathcal{A}_{1} f \right)+ \left( \Delta_{\Gamma}+\theta\right)\left( \mathcal{A}_{2} f_{\Gamma} \right)\right) f_{\Gamma} \mathrm{d} S\\
&+\displaystyle\int_{\Gamma} \partial_{\nu} f \left(\mathcal{A}_{1}f\right)_{\Gamma} \mathrm{d} S -\int_{\Gamma} \partial_{\nu} f \mathcal{A}_{2} f_{\Gamma} \mathrm{d} S.
\end{array}
\]
Therefore, we obtain
\begin{equation}\label{3.15}
\langle\mathcal{S} F, \mathcal{A} F\rangle = \langle\mathcal{S}  \mathcal{A} F, F\rangle + \int_{\Gamma} \partial_{\nu}f  \left[\left( \mathcal{A}_{1} f \right)_{\Gamma}-  \mathcal{A}_{2} f_{\Gamma} \right]\mathrm{d} S.
\end{equation}
Similarly, we prove that 
\begin{equation}\label{3.16}
\langle\mathcal{S} F, \mathcal{A} F\rangle = - \langle\mathcal{A} \mathcal{S}   F, F\rangle - \int_{\Gamma} \partial_{\nu}\Phi f_{\Gamma} \left( \mathcal{S}_{1} f \right)_{\Gamma}\mathrm{d} S + \int_{\Gamma}  \partial_{\nu}\Phi f_{\Gamma}  \mathcal{S}_{2}F \mathrm{d} S.
\end{equation}
By combining \eqref{3.15} and \eqref{3.16}, we obtain
\begin{align}\label{2.20}
 2 \langle\mathcal{S} F, \mathcal{A} F\rangle =& \langle\mathcal{S}  \mathcal{A} F, F\rangle-\langle\mathcal{A}  \mathcal{S} F, F\rangle + \int_{\Gamma} \partial_{\nu}f  \left( \mathcal{A}_{1} f \right)_{\Gamma} \mathrm{d} S- \int_{\Gamma} \partial_{\nu}f  \mathcal{A}_{2} f_{\Gamma}\mathrm{d} S \nonumber\\
 &- \int_{\Gamma} \partial_{\nu}\Phi f_{\Gamma} \left( \mathcal{S}_{1} f \right)_{\Gamma}\mathrm{d} S + \int_{\Gamma}  \partial_{\nu}\Phi f_{\Gamma}  \mathcal{S}_{2}F \mathrm{d} S.
 \end{align}
Combining the equalities \eqref{2.17} and \eqref{2.20} yields the desired formula. 
\bigskip

\noindent\textbf{Step 3.\, Calculating the Carleman commutator.} We show  that
\begin{align}\label{9.1}
& \left \langle -(\mathcal{S}^\prime+[\mathcal{S},A])F,F\right \rangle-\int_{\Gamma}\partial_{\nu}f (A_{1}f)_{|\Gamma}\mathrm{d}S+\int_{\Gamma}\partial_{\nu}f A_{2}f_{\Gamma}\mathrm{d}S\nonumber\\
&+\int_{\Gamma}\partial_{\nu}\Phi f_{\Gamma}(\mathcal{S}_{1}f)_{\Gamma}\mathrm{d} S-\int_{\Gamma}\partial_{\nu}\Phi f_{\Gamma}\mathcal{S}_{2}F\mathrm{d}S\nonumber\\
&=\dfrac{-s}{\Upsilon^3}\int_{\Omega}\left( \varphi+\dfrac{s}{2}|\nabla \varphi|^2\right) |f|^2\mathrm{d}x+\dfrac{s}{\Upsilon}\int_{\Omega}|\nabla f|^2\mathrm{d}x+\dfrac{s}{\Upsilon}\int_{\Gamma}\partial_{\nu}\varphi |\partial_{\nu} f|^2\mathrm{d}x\nonumber\\
&-\dfrac{s^2(2-s)}{4\Upsilon^3}\int_{\Omega}|\nabla\varphi|^2 |f|^2\mathrm{d}x-\dfrac{s}{\Upsilon^3}\int_{\Gamma}\left( \varphi+\dfrac{s}{2}|\nabla_{\Gamma} \varphi|^2\right) |f_{\Gamma}|^2\mathrm{d}S\nonumber\\
&-\dfrac{2s}{\Upsilon}\int_{\Gamma}\nabla_{\Gamma}^2\varphi(\nabla_{\Gamma} f_{\Gamma},\nabla_{\Gamma} f_{\Gamma})\mathrm{d}S+\dfrac{s}{\Upsilon}\int_{\Gamma}(\Delta \varphi+\partial_{\nu}\varphi-\Delta_{\Gamma}\varphi)\partial_{\nu}f f_{\Gamma}\mathrm{d}S\nonumber\\
&+\dfrac{s}{2\Upsilon}\int_{\Gamma}(\Delta_{\Gamma}^2\varphi-\Delta_{\Gamma}\partial_{\nu}\varphi)|f_{\Gamma}|^2\mathrm{d}S-\dfrac{s^2}{2\Upsilon^3}\int_{\Gamma}\left(|\nabla_{\Gamma}\varphi|^2+s\nabla_{\Gamma}^2\varphi(\nabla_{\Gamma}\varphi,\nabla_{\Gamma}\varphi)\right) |f_{\Gamma}|^2\mathrm{d}S\nonumber\\
& +\dfrac{s^3}{4\Upsilon^3}\int_{\Gamma}(\partial_{\nu}\varphi)^3|f_{\Gamma}|^2\mathrm{d}S.
\end{align}
Indeed, one has
\begin{equation*}
\mathcal{S}\mathcal{A}F =  \begin{pmatrix}
\Delta\left(-\nabla \Phi \cdot\nabla f - \frac{1}{2}\Delta\Phi f \right)-\eta\nabla \Phi\cdot\nabla f -\frac{1}{2}\eta\Delta\Phi f \\[3mm]
\partial_{\nu}\left( \nabla\Phi\cdot\nabla f + \frac{1}{2}\Delta\Phi f \right) + \frac{1}{2}\Delta_{\Gamma}\partial_{\nu}\Phi f_{\Gamma} + \langle\nabla_{\Gamma}\partial_{\nu}\Phi, \nabla_{\Gamma}f_{\Gamma}\rangle_{\Gamma}+\frac{1}{2}\partial_{\nu}\Phi\Delta_{\Gamma}f_{\Gamma}\\
-\Delta_{\Gamma}\left(\langle\nabla_{\Gamma}\Phi, \nabla_{\Gamma}f_{\Gamma}\rangle_{\Gamma} \right)-\frac{1}{2}\Delta_{\Gamma}^{2}\Phi f_{\Gamma}-\langle\nabla_{\Gamma}\Delta_{\Gamma}\Phi, \nabla_{\Gamma}f_{\Gamma}\rangle_{\Gamma} -\frac{1}{2}\Delta_{\Gamma}\Phi\Delta_{\Gamma}f\\
+\frac{1}{2}\theta \partial_{\nu}\Phi f_{\Gamma}-\theta\langle\nabla_{\Gamma}\Phi, \nabla_{\Gamma}f_{\Gamma}\rangle_{\Gamma}-\frac{1}{2}\theta\Delta_{\Gamma}\Phi f_{\Gamma}
\end{pmatrix},
\end{equation*}
and
\begin{equation*}
\mathcal{A}\mathcal{S}F =  \begin{pmatrix}
-\nabla\Phi\cdot\nabla\Delta f -\nabla\Phi\cdot\nabla \eta f -\eta\nabla \Phi\cdot\nabla f-\frac{1}{2}\Delta\Phi\Delta f -\frac{1}{2}\eta\Delta\Phi f& \\[3mm]
-\frac{1}{2}\partial_{\nu}\Phi \partial_{\nu} f+ \frac{1}{2}\partial_{\nu}\Phi \Delta_{\Gamma} f_{\Gamma}+\frac{1}{2}\theta\partial_{\nu}\Phi  f_{\Gamma}+\langle\nabla_{\Gamma}\Phi,\nabla_{\Gamma}\partial_{\nu} f\rangle_{\Gamma} &\\
 -\langle\nabla_{\Gamma}\Phi,\nabla_{\Gamma}\Delta_{\Gamma} f_{\Gamma}\rangle_{\Gamma}- \langle\nabla_{\Gamma}\Phi, \nabla_{\Gamma}\theta\rangle_{\Gamma}f_{\Gamma} -\theta\langle\nabla_{\Gamma}\Phi, \nabla_{\Gamma}f_{\Gamma}\rangle_{\Gamma}&\\
 +\frac{1}{2}\Delta_{\Gamma} \Phi\partial_{\nu} f -\frac{1}{2}\Delta_{\Gamma}\Phi \Delta_{\Gamma}f_{\Gamma}-\frac{1}{2}\theta\Delta_{\Gamma}\Phi f_{\Gamma}
\end{pmatrix}.
\end{equation*}
We also have
\begin{align*}
        \mathcal{S}^{\prime}F &=\left(\mathcal{S}F\right)^{\prime} - \mathcal{S} F^{\prime} \notag\\
&= \begin{pmatrix}
\Delta\left(\partial_{t}f\right)+\partial_{t}\eta f+\eta \partial_{t} f \\  -\partial_{t}\left(\partial_{\nu}f\right) +\Delta_{\Gamma}\partial_{t} f_{\Gamma}+\partial_{t}\theta f_{\Gamma}+\theta \partial_{t}f_{\Gamma}
\end{pmatrix} - \begin{pmatrix}
\Delta\left(\partial_{t}f\right)+\eta \partial_{t} f \\  -\partial_{t}\left(\partial_{\nu}f\right) +\Delta_{\Gamma}\partial_{t} f_{\Gamma}+\theta \partial_{t}f_{\Gamma}
\end{pmatrix} \notag\\
&= \begin{pmatrix}
\partial_{t}\eta f \\ \partial_{t}\theta f_{\Gamma}
\end{pmatrix}.
\end{align*}
Hence
\begin{equation}\label{2.25}
\begin{aligned}
-\left(\mathcal{S}^{\prime}+[\mathcal{S}, \mathcal{A}]\right) F = \begin{pmatrix}
-\partial_{t}\eta f +\Delta\left(\nabla \Phi\cdot \nabla f + \frac{1}{2}\Delta\Phi f \right)-\nabla\Phi\cdot\nabla\Delta f -\nabla\Phi\cdot\nabla\eta f -\frac{1}{2}\Delta\Phi\Delta f \\[3mm]
-\partial_{t}\theta f_{\Gamma}-\partial_{\nu}\left( \nabla\Phi\cdot\nabla f + \frac{1}{2}\Delta\Phi f \right) - \frac{1}{2}\Delta_{\Gamma}\partial_{\nu}\Phi f_{\Gamma} - \langle\nabla_{\Gamma}\partial_{\nu}\Phi, \nabla_{\Gamma}f_{\Gamma}\rangle_{\Gamma}\\
+\Delta_{\Gamma}\left(\langle\nabla_{\Gamma}\Phi, \nabla_{\Gamma}f_{\Gamma}\rangle_{\Gamma} \right)+\frac{1}{2}\Delta_{\Gamma}^{2}\Phi f_{\Gamma}+\langle\nabla_{\Gamma}\Delta_{\Gamma}\Phi, \nabla_{\Gamma}f_{\Gamma}\rangle_{\Gamma} -\frac{1}{2}\partial_{\nu}\Phi\partial_{\nu}f\\
+\langle\nabla_{\Gamma}\Phi,\nabla_{\Gamma}\partial_{\nu} f\rangle_{\Gamma} -\langle\nabla_{\Gamma}\Phi,\nabla_{\Gamma}\Delta_{\Gamma} f_{\Gamma}\rangle_{\Gamma}- \langle\nabla_{\Gamma}\Phi, \nabla_{\Gamma}\theta\rangle_{\Gamma}f_{\Gamma} +\frac{1}{2} \Delta_{\Gamma}\Phi\partial_{\nu} f_{\Gamma}
\end{pmatrix}.
\end{aligned}
\end{equation}
On the other hand,
\begin{align}
\label{2.26}
&-\displaystyle\int_{\Gamma}\partial_{\nu}f (A_{1}f)_{|\Gamma}\mathrm{d}S+\int_{\Gamma}\partial_{\nu}f A_{2}f_{\Gamma}\mathrm{d}S+\int_{\Gamma}\partial_{\nu}\Phi f_{\Gamma}(\mathcal{S}_{1}f)_{\Gamma}\mathrm{d} S-\int_{\Gamma}\partial_{\nu}\Phi f_{\Gamma}\mathcal{S}_{2}F\mathrm{d}S\nonumber \\
&=-\displaystyle\int_{\Gamma}\partial_{\nu}f\left(-\nabla \Phi\cdot \nabla f-\frac{1}{2}\Delta \Phi f_{\Gamma}\right)\mathrm{d}S+\int_{\Gamma}\partial_{\nu}f\left(\dfrac{1}{2}\partial_{\nu}\Phi f_{\Gamma}-\left \langle\nabla_{\Gamma}\Phi,\nabla_{\Gamma}f_{\Gamma}\right \rangle_{\Gamma}-\frac{1}{2}\Delta_{\Gamma}\Phi f_{\Gamma}\right)\mathrm{d}S\nonumber \\
& +\displaystyle\int_{\Gamma}\partial_{\nu}\Phi f_{\Gamma}\left((\Delta f)_{|\Gamma}+\eta f_{\Gamma}\right)\mathrm{d}S-\int_{\Gamma}\partial_{\nu}\Phi f_{\Gamma}\left(-\partial_{\nu}f+\Delta_{\Gamma}f_{\Gamma}+\theta f_{\Gamma}\right)\mathrm{d}S \nonumber \\
&=\displaystyle\int_{\Gamma}\partial_{\nu}f \left(\left\langle \nabla_{\Gamma}\Phi,\nabla_{\Gamma}f_{\Gamma}\right\rangle_{\Gamma}+\partial_{\nu}\Phi \partial_{\nu} f\right)\mathrm{d}S+\dfrac{1}{2}\int_{\Gamma}\Delta \Phi \partial_{\nu}f f_{\Gamma}\mathrm{d}S +\dfrac{1}{2}\int_{\Gamma}\partial_{\nu}\Phi \partial_{\nu}f f_{\Gamma}\mathrm{d}S \nonumber\\
&-\displaystyle\int_{\Gamma}\partial_{\nu}f \left\langle\nabla_{\Gamma}\Phi,\nabla_{\Gamma}f_{\Gamma}\right\rangle_{\Gamma}\mathrm{d}S-\dfrac{1}{2}\int_{\Gamma}\partial_{\nu}f \Delta_{\Gamma}\Phi f_{\Gamma}\mathrm{d}S+\int_{\Gamma}\partial_{\nu}\Phi f_{\Gamma}(\Delta f)_{|\Gamma} \mathrm{d}S\nonumber\\
&+\displaystyle\int_{\Gamma}\eta \partial_{\nu}\Phi |f_{\Gamma}|^2\mathrm{d}S+\int_{\Gamma}\partial_{\nu}\Phi \partial_{\nu}f f_{\Gamma} \mathrm{d}S-\int_{\Gamma}\partial_{\nu}\Phi f_{\Gamma}\Delta_{\Gamma}f_{\Gamma}\mathrm{d}S-\int_{\Gamma}\theta \partial_{\nu}\Phi |f_{\Gamma}|^2 \mathrm{d}S\nonumber\\
&=\displaystyle\int_{\Gamma} \partial_{\nu}\Phi |\partial_{\nu}f|^2 \mathrm{d}S+\dfrac{1}{2}\int_{\Gamma}(\Delta\Phi-\Delta_{\Gamma}\Phi)\partial_{\nu}f f_{\Gamma}\mathrm{d}S+\dfrac{3}{2}\int_{\Gamma}\partial_{\nu}\Phi \partial_{\nu}f f_{\Gamma}\mathrm{d}S\nonumber\\
&+\displaystyle\int_{\Gamma}\partial_{\nu}\Phi f_{\Gamma}(\Delta f)_{|\Gamma}\mathrm{d}S-\int_{\Gamma}\partial_{\nu}\Phi f_{\Gamma}\Delta_{\Gamma}f_{\Gamma}\mathrm{d}S+\int_{\Gamma}\partial_{\nu}\Phi (\eta-\theta)|f_{\Gamma}|^2\mathrm{d}S\nonumber\\
&=\displaystyle\int_{\Gamma} \partial_{\nu}\Phi |\partial_{\nu}f|^2 \mathrm{d}S+\dfrac{1}{2}\int_{\Gamma}(\Delta\Phi-\Delta_{\Gamma}\Phi)\partial_{\nu}f f_{\Gamma}\mathrm{d}S+\dfrac{3}{2}\int_{\Gamma}\partial_{\nu}\Phi \partial_{\nu}f f_{\Gamma}\mathrm{d}S\nonumber\\
& +\displaystyle\int_{\Gamma}\partial_{\nu}\Phi f_{\Gamma}(\Delta f)_{|\Gamma}\mathrm{d}S-\int_{\Gamma}\partial_{\nu}\Phi f_{\Gamma}\Delta_{\Gamma}f_{\Gamma}\mathrm{d}S+\dfrac{1}{4}\int_{\Gamma}(\partial_{\nu}\Phi)^3 |f_{\Gamma}|^2 \mathrm{d}S.
\end{align}
Combining \eqref{2.25} and \eqref{2.26}, we obtain
\begin{align*}
& \left\langle -(\mathcal{S}^\prime+[S,A])F,F\right\rangle-\int_{\Gamma}\partial_{\nu}f (A_{1}f)_{|\Gamma}\mathrm{d}S+\int_{\Gamma}\partial_{\nu}f A_{2}f_{\Gamma}\mathrm{d}S\nonumber\\
&+\int_{\Gamma}\partial_{\nu}\Phi f_{\Gamma}(\mathcal{S}_{1}f)_{\Gamma}\mathrm{d} S-\int_{\Gamma}\partial_{\nu}\Phi f_{\Gamma}\mathcal{S}_{2}F\mathrm{d}S\nonumber\\
&=\int_{\Omega}-\partial_{t}\eta |f|^2\mathrm{d}x+\int_{\Omega}\Delta\left( \nabla \Phi\cdot\nabla f+\dfrac{1}{2}\Delta \Phi f\right)f\mathrm{d}x-\int_{\Omega}\nabla\Phi\cdot\nabla\Delta f f\mathrm{d}x\\
&-\int_{\Omega}\nabla\Phi\cdot\nabla \eta |f|^2\mathrm{d}x-\dfrac{1}{2}\int_{\Omega}\Delta \Phi \Delta f f\mathrm{d}x-\int_{\Gamma}\partial_{t}\theta |f_{\Gamma}|^2\mathrm{d}S\\
&-\int_{\Gamma}\partial_{\nu}\left( \nabla \Phi\cdot\nabla f+\dfrac{1}{2}\Delta \Phi f\right)f_{\Gamma}\mathrm{d}S-\dfrac{1}{2}\int_{\Gamma}\Delta_{\Gamma}\partial_{\nu}\Phi|f_{\Gamma}|^2\mathrm{d}S\\
&-\displaystyle\int_{\Gamma}\left\langle\nabla_{\Gamma}\partial_{\nu}\Phi,\nabla_{\Gamma}f_{\Gamma}\right\rangle_{\Gamma}f_{\Gamma}\mathrm{d}S+\int_{\Gamma}\Delta_{\Gamma}\left( \left\langle \nabla_{\Gamma} \Phi,\nabla_{\Gamma}f_{\Gamma}\right\rangle_{\Gamma}\right)f_{\Gamma}\mathrm{d}S\\
&+\dfrac{1}{2}\int_{\Gamma}\Delta_{\Gamma}^2 \Phi |f_{\Gamma}|^2\mathrm{d}S+\int_{\Gamma}\left\langle\nabla_{\Gamma}(\Delta_{\Gamma}\Phi),\nabla_{\Gamma}f_{\Gamma}\right\rangle_{\Gamma}f_{\Gamma}\mathrm{d}S-\dfrac{1}{2}\int_{\Gamma}\partial_{\nu}\Phi \partial_{\nu}f f_{\Gamma}\mathrm{d}S\\
&+\int_{\Gamma}\left\langle\nabla_{\Gamma}\Phi,\nabla_{\Gamma}\partial_{\nu}f\right\rangle_{\Gamma}f_{\Gamma}\mathrm{d}S-\int_{\Gamma}\left\langle \nabla_{\Gamma}\Phi,\nabla_{\Gamma}\Delta_{\Gamma}f_{\Gamma}\right\rangle f_{\Gamma}\mathrm{d}S\\
&-\int_{\Gamma}\left\langle\nabla_{\Gamma}\Phi,\nabla_{\Gamma}\theta\right\rangle_{\Gamma}|f_{\Gamma}|^2\mathrm{d}S+\dfrac{1}{2}\int_{\Gamma}\Delta_{\Gamma}\Phi\partial_{\nu}f f_{\Gamma}\mathrm{d}S+\int_{\Gamma} \partial_{\nu}\Phi |\partial_{\nu}f|^2 \mathrm{d}S\\
&+\dfrac{1}{2}\int_{\Gamma}(\Delta\Phi-\Delta_{\Gamma}\Phi)\partial_{\nu}f f_{\Gamma}\mathrm{d}S+\dfrac{3}{2}\int_{\Gamma}\partial_{\nu}\Phi \partial_{\nu}f f_{\Gamma}\mathrm{d}S\\
&+\int_{\Gamma}\partial_{\nu}\Phi f_{\Gamma}(\Delta f)_{|\Gamma}\mathrm{d}S-\int_{\Gamma}\partial_{\nu}\Phi f_{\Gamma}\Delta_{\Gamma}f_{\Gamma}\mathrm{d}S+\dfrac{1}{4}\int_{\Gamma}(\partial_{\nu}\Phi)^3 |f_{\Gamma}|^2\mathrm{d}S.
\end{align*}
Using integration by parts, we infer that
\begin{align*}
& \left\langle -(\mathcal{S}^\prime+[S,A])F,F\right\rangle-\int_{\Gamma}\partial_{\nu}f (A_{1}f)_{|\Gamma}\mathrm{d}S+\int_{\Gamma}\partial_{\nu}f A_{2}f_{\Gamma}\mathrm{d}S\nonumber\\
&+\int_{\Gamma}\partial_{\nu}\Phi f_{\Gamma}(\mathcal{S}_{1}f)_{\Gamma}\mathrm{d} S-\int_{\Gamma}\partial_{\nu}\Phi f_{\Gamma}\mathcal{S}_{2}F\mathrm{d}S\nonumber\\
&=\int_{\Omega}-\partial_{t}\eta |f|^2\mathrm{d}x+\int_{\Gamma}\partial_{\nu}\left( \nabla \Phi\cdot\nabla f+\dfrac{1}{2}\Delta \Phi f\right)f_{\Gamma}\mathrm{d}S\\
&-\int_{\Omega}\nabla\left( \nabla \Phi\cdot\nabla f+\dfrac{1}{2}\Delta \Phi f\right)\cdot\nabla f\mathrm{d}x-\int_{\Gamma}\partial_{\nu}\Phi (\Delta f)_{|\Gamma}f_{\Gamma}\mathrm{d}S \\
&+\int_{\Omega}\Delta \Phi \Delta f f \mathrm{d}x+\int_{\Omega}\Delta f \nabla\Phi\cdot\nabla f\mathrm{d}x-\int_{\Omega}\nabla \Phi\cdot\nabla \eta |f|^2\mathrm{d}x\\
&-\dfrac{1}{2}\int_{\Gamma}\Delta \Phi \partial_{\nu}f f_{\Gamma}\mathrm{d}S+\dfrac{1}{2}\int_{\Omega}\nabla f\cdot\nabla(\Delta \Phi f)\mathrm{d}x-\int_{\Gamma}\partial_{t}\theta |f_{\Gamma}|^2\mathrm{d}S\\
&-\int_{\Gamma}\partial_{\nu}\left( \nabla \Phi\cdot\nabla f+\dfrac{1}{2}\Delta \Phi f\right)f_{\Gamma}\mathrm{d}S-\dfrac{1}{2}\int_{\Gamma}\Delta_{\Gamma}\partial_{\nu}\Phi|f_{\Gamma}|^2\mathrm{d}S\\
&-\dfrac{1}{2}\int_{\Gamma}\left \langle\nabla_{\Gamma}\partial_{\nu}\Phi,\nabla_{\Gamma}|f_{\Gamma}|^2\right\rangle_{\Gamma}\mathrm{d}S-\int_{\Gamma}\left\langle\nabla_{\Gamma}\left(\left \langle\nabla_{\Gamma}\Phi,\nabla_{\Gamma}f_{\Gamma}\right \rangle_{\Gamma}\right),\nabla_{\Gamma}f_{\Gamma}\right \rangle_{\Gamma}\mathrm{d}S\\
&+\dfrac{1}{2}\int_{\Gamma} \Delta_{\Gamma}^2\Phi|f_{\Gamma}|^2\mathrm{d}S+\dfrac{1}{2}\int_{\Gamma}\left \langle\nabla_{\Gamma}\Delta_{\Gamma}\Phi,\nabla_{\Gamma}|f_{\Gamma}|^2\right \rangle_{\Gamma}\mathrm{d}S+\int_{\Gamma}\partial_{\nu}\Phi \partial_{\nu}f f_{\Gamma}\mathrm{d}S\\
&-\int_{\Gamma}\Delta_{\Gamma}\Phi \partial_{\nu}f f_{\Gamma}\mathrm{d}S-\int_{\Gamma}\left \langle\nabla_{\Gamma}\Phi,\nabla_{\Gamma}f_{\Gamma}\right\rangle_{\Gamma}\partial_{\nu}f\mathrm{d}S+\int_{\Gamma}\Delta_{\Gamma}\Phi \Delta_{\Gamma}f_{\Gamma}f_{\Gamma}\mathrm{d}S\\
&+\int_{\Gamma}\Delta_{\Gamma}f_{\Gamma}\left \langle\nabla_{\Gamma}\Phi,\nabla_{\Gamma}f_{\Gamma}\right \rangle_{\Gamma}\mathrm{d}S-\int_{\Gamma}\left \langle\nabla_{\Gamma}\Phi,\nabla_{\Gamma}\theta\right \rangle_{\Gamma}|f_{\Gamma}|^2\mathrm{d}S+\dfrac{1}{2}\int_{\Gamma}\Delta_{\Gamma}\Phi \partial_{\nu}f f_{\Gamma}\mathrm{d}S\\
&+\int_{\Gamma} \partial_{\nu}\Phi |\partial_{\nu}f|^2 \mathrm{d}S+\dfrac{1}{2}\int_{\Gamma}(\Delta\Phi-\Delta_{\Gamma}\Phi)\partial_{\nu}f f_{\Gamma}\mathrm{d}S\\
&+\displaystyle\int_{\Gamma}\partial_{\nu}\Phi f_{\Gamma}(\Delta f)_{|\Gamma}\mathrm{d}S-\int_{\Gamma}\partial_{\nu}\Phi f_{\Gamma}\Delta_{\Gamma}f_{\Gamma}\mathrm{d}S+\dfrac{1}{4}\int_{\Gamma}(\partial_{\nu}\Phi)^3 |f_{\Gamma}|^2\mathrm{d}S\\
&=-\int_{\Omega}\partial_{t}\eta |f|^2\mathrm{d}x-\int_{\Omega}\nabla^2\Phi (\nabla f,\nabla f)\mathrm{d}x-\dfrac{1}{2}\int_{\Omega}\nabla \Phi\cdot\nabla |\nabla f|^2\mathrm{d}x\\
&+\int_{\Gamma}\Delta \Phi \partial_{\nu}f f_{\Gamma}\mathrm{d}S-\dfrac{1}{2}\int_{\Omega}\nabla \Delta \Phi\cdot\nabla|f|^2\mathrm{d}x-\int_{\Omega}\Delta \Phi |\nabla f|^2\mathrm{d}x\\
&+\int_{\Gamma}\partial_{\nu}f \nabla \Phi\cdot\nabla f\mathrm{d}S-\int_{\Omega}\nabla f\cdot\nabla(\nabla\Phi\cdot\nabla f)\mathrm{d}x-\int_{\Omega}\nabla \Phi\cdot\nabla \eta |f|^2\mathrm{d}x\\
&-\int_{\Gamma}\partial_{t}\theta |f_{\Gamma}|^2\mathrm{d}S-\dfrac{1}{2}\int_{\Gamma}\Delta_{\Gamma}\partial_{\nu}\Phi |f_{\Gamma}|^2\mathrm{d}S+\dfrac{1}{2}\int_{\Gamma}\Delta_{\Gamma}\partial_{\nu}\Phi |f_{\Gamma}|^2\mathrm{d}S\\
&-\int_{\Gamma}\nabla_{\Gamma}^2\Phi(\nabla_{\Gamma}f_{\Gamma},\nabla_{\Gamma}f_{\Gamma})\mathrm{d}S-\dfrac{1}{2}\int_{\Gamma}\left \langle\nabla_{\Gamma}\Phi,\nabla_{\Gamma}|\nabla_{\Gamma}f_{\Gamma}|^2\right \rangle_{\Gamma}\mathrm{d}S\\
&+\dfrac{1}{2}\int_{\Gamma}\Delta_{\Gamma}^2\Phi |f_{\Gamma}|^2\mathrm{d}S-\dfrac{1}{2}\int_{\Gamma}\Delta_{\Gamma}^2\Phi |f_{\Gamma}|^2\mathrm{d}S+\int_{\Gamma}(\partial_{\nu}\Phi-\Delta_{\Gamma}\Phi)\partial_{\nu}f f_{\Gamma}\mathrm{d}S\\
&-\int_{\Gamma}\left \langle\nabla_{\Gamma}\Phi,\nabla_{\Gamma}f_{\Gamma}\right \rangle_{\Gamma}\partial_{\nu}f\mathrm{d}S+\int_{\Gamma}(\Delta_{\Gamma}\Phi-\partial_{\nu}\Phi)f_{\Gamma}\Delta_{\Gamma}f_{\Gamma}\mathrm{d}S\\
&-\int_{\Gamma}\left \langle\nabla_{\Gamma}f_{\Gamma},\nabla_{\Gamma}(\left \langle \nabla_{\Gamma}\Phi,\nabla_{\Gamma}f_{\Gamma}\right \rangle_{\Gamma})\right \rangle_{\Gamma}\mathrm{d}S-\int_{\Gamma}\left \langle\nabla_{\Gamma}\Phi,\nabla_{\Gamma}\theta\right \rangle_{\Gamma} |f_{\Gamma}|^2\mathrm{d}S\\
&+\int_{\Gamma}\partial_{\nu}\Phi |\partial_{\nu}f|^2\mathrm{d}S+\dfrac{1}{4}\int_{\Gamma}(\partial_{\nu}\Phi)^3|f_{\Gamma}|^2\mathrm{d}S\\
&=-\int_{\Omega}\partial_{t}\eta |f|^2\mathrm{d}x-\int_{\Omega}\nabla^2\Phi(\nabla f,\nabla f)\mathrm{d}x-\dfrac{1}{2}\int_{\Gamma}\partial_{\nu}\Phi|\nabla f|^2\mathrm{d}x\\
&+\dfrac{1}{2}\int_{\Omega}\Delta \Phi |\nabla f|^2\mathrm{d}x-\dfrac{1}{2}\int_{\Gamma}\partial_{\nu}\Delta \Phi |f|^2\mathrm{d}S+\dfrac{1}{2}\int_{\Omega}\Delta ^2\Phi |f|^2\mathrm{d}x\\
&-\int_{\Omega}\Delta \Phi |\nabla f|^2\mathrm{d}x+\int_{\Gamma}\partial_{\nu}\Phi |\partial_{\nu}f|^2\mathrm{d}S-\int_{\Omega}\nabla^2\Phi(\nabla f,\nabla f)\mathrm{d}x\\
&-\dfrac{1}{2}\int_{\Omega}\nabla \Phi\cdot\nabla |\nabla f|^2\mathrm{d}x-\int_{\Omega}\nabla \Phi\cdot\nabla\eta |f|^2\mathrm{d}x-\int_{\Gamma}\partial_{t}\theta |f_{\Gamma}|^2\mathrm{d}x\\
&-\int_{\Gamma}\nabla_{\Gamma}^2\Phi (\nabla_{\Gamma}f_{\Gamma},\nabla_{\Gamma}f_{\Gamma})\mathrm{d}S+\dfrac{1}{2}\int_{\Gamma}\Delta_{\Gamma}\Phi|\nabla_{\Gamma}f_{\Gamma}|^2\mathrm{d}x\\
&+\int_{\Gamma}(\Delta \Phi+\partial_{\nu}\Phi-\Delta_{\Gamma}\Phi)\partial_{\nu}f f_{\Gamma}\mathrm{d}S-\int_{\Gamma}\left \langle\nabla_{\Gamma}((\Delta_{\Gamma}\Phi-\partial_{\nu}\Phi)f_{\Gamma}),\nabla_{\Gamma}f_{\Gamma}\right\rangle_{\Gamma}\mathrm{d}S\\
&-\int_{\Gamma}\nabla_{\Gamma}^2\Phi(\nabla_{\Gamma}f_{\Gamma},\nabla_{\Gamma}f_{\Gamma})\mathrm{d}S-\dfrac{1}{2}\int_{\Gamma}\left \langle\nabla_{\Gamma}\Phi,\nabla_{\Gamma}|\nabla_{\Gamma}f_{\Gamma}|^2\right \rangle_{\Gamma}\mathrm{d}S\\
&-\int_{\Gamma}\left \langle\nabla_{\Gamma}\Phi,\nabla_{\Gamma}\theta\right\rangle_{\Gamma}|f_{\Gamma}|^2\mathrm{d}S+\int_{\Gamma}\partial_{\nu}\Phi |\partial_{\nu}f|^2\mathrm{d}S+ \dfrac{1}{4}\int_{\Gamma}(\partial_{\nu}\Phi)^3 |f_{\Gamma}|^2\mathrm{d}S\\
&=-\int_{\Omega}\partial_{t}\eta |f|^2\mathrm{d}x-2\int_{\Omega}\nabla^2\Phi(\nabla f,\nabla f)\mathrm{d}x-\dfrac{1}{2}\int_{\Gamma}\partial_{\nu}\Phi|\nabla f|^2\mathrm{d}x\\
&-\dfrac{1}{2}\int_{\Omega}\Delta \Phi |\nabla f|^2\mathrm{d}x+2\displaystyle\int_{\Gamma}\partial_{\nu}\Phi|\partial_{\nu}f|^2\mathrm{d}S-\dfrac{1}{2}\int_{\Gamma}\partial_{\nu}\Phi|\nabla f|^2\mathrm{d}S\\
&+\dfrac{1}{2}\int_{\Omega}\Delta\Phi |\nabla f|^2\mathrm{d}x-\int_{\Omega}\nabla\Phi\cdot\nabla \eta |f|^2\mathrm{d}x-\int_{\Gamma}\partial_{t}\theta |f_{\Gamma}|^2\mathrm{d}S\\
&-2\int_{\Gamma}\nabla_{\Gamma}^2\Phi(\nabla_{\Gamma} f_{\Gamma},\nabla_{\Gamma} f_{\Gamma})\mathrm{d}S +\dfrac{1}{2}\int_{\Gamma}\Delta_{\Gamma} \Phi |\nabla_{\Gamma} f_{\Gamma}|^2\mathrm{d}S\\
&+\int_{\Gamma}(\Delta \Phi+\partial_{\nu}\Phi-\Delta_{\Gamma}\Phi)\partial_{\nu}f f_{\Gamma}\mathrm{d}S-\dfrac{1}{2}\int_{\Gamma}\left \langle\nabla_{\Gamma}(\Delta_{\Gamma}\Phi-\partial_{\nu}\Phi),\nabla_{\Gamma}|f_{\Gamma}|^2\right \rangle_{\Gamma}\mathrm{d}S\\
&-\int_{\Gamma}(\Delta_{\Gamma}\Phi-\partial_{\nu} \Phi)|\nabla_{\Gamma}f_{\Gamma}|^2\mathrm{d}S+\dfrac{1}{2}\int_{\Gamma}\Delta _{\Gamma}\Phi |\nabla_{\Gamma}f_{\Gamma}|^2\mathrm{d}S-\int_{\Gamma}\left \langle\nabla_{\Gamma}\Phi,\nabla_{\Gamma}\theta \right\rangle_{\Gamma}|f_{\Gamma}|^2\mathrm{d}S\\
&+\dfrac{1}{4}\int_{\Gamma}(\partial_{\nu}\Phi)^3|f_{\Gamma}|^2\mathrm{d}S.
\end{align*}
By the above formula and the fact that $\left\lvert \nabla f\right\lvert^2=\left\lvert \nabla_{\Gamma} f\right\lvert^2+\left\lvert \partial_{\nu} f\right\lvert^2 ,$ we obtain
\begin{align*}
    & \left\langle-(\mathcal{S}^\prime+[S,A])F,F\right\rangle-\int_{\Gamma}\partial_{\nu}f (A_{1}f)_{|\Gamma}\mathrm{d}S+\int_{\Gamma}\partial_{\nu}f A_{2}f_{\Gamma}\mathrm{d}S\nonumber\\
&+\int_{\Gamma}\partial_{\nu}\Phi f_{\Gamma}(\mathcal{S}_{1}f)_{\Gamma}\mathrm{d} S-\int_{\Gamma}\partial_{\nu}\Phi f_{\Gamma}\mathcal{S}_{2}F\mathrm{d}S\nonumber\\
&=-\displaystyle\int_{\Omega}\partial_{t}\eta |f|^2\mathrm{d}x-2\int_{\Omega}\nabla^2\Phi(\nabla f,\nabla f)\mathrm{d}x+\int_{\Gamma}\partial_{\nu}\Phi|\partial_{\nu} f|^2\mathrm{d}S\nonumber\\
&-\int_{\Omega}\nabla\Phi\cdot\nabla \eta |f|^2\mathrm{d}x-\int_{\Gamma}\partial_{t}\theta |f_{\Gamma}|^2\mathrm{d}S-2\int_{\Gamma}\nabla_{\Gamma}^2\Phi(\nabla_{\Gamma} f_{\Gamma},\nabla_{\Gamma} f_{\Gamma})\mathrm{d}S\nonumber\\
&+\int_{\Gamma}(\Delta \Phi+\partial_{\nu}\Phi-\Delta_{\Gamma}\Phi)\partial_{\nu}f f_{\Gamma}\mathrm{d}S+\dfrac{1}{2}\int_{\Gamma}(\Delta_{\Gamma}^2\Phi-\Delta_{\Gamma}\partial_{\nu}\Phi)|f_{\Gamma}|^2\mathrm{d}S\nonumber\\
&-\displaystyle\int_{\Gamma}\left \langle\nabla_{\Gamma}\Phi,\nabla_{\Gamma}\theta \right\rangle_{\Gamma}|f_{\Gamma}|^2\mathrm{d}S+\dfrac{1}{4}\int_{\Gamma}(\partial_{\nu}\Phi)^3|f_{\Gamma}|^2\mathrm{d}S.
\end{align*}
Using the definition of $\Phi\left(x,t\right)= \frac{s \varphi(x)}{\Upsilon(t)},$ we obtain the desired formula \eqref{9.1}.
\smallskip

\noindent\textbf{Step 4.}
For any $h\in(0,1]$ and $s\in (0,1)$ sufficiently small, we prove that
\begin{align}\label{14.11}
&\left\langle-(\mathcal{S}^\prime+[\mathcal{S},A])F,F\right \rangle-\int_{\Gamma}\partial_{\nu}f (A_{1}f)_{|\Gamma}\mathrm{d}S+\int_{\Gamma}\partial_{\nu}f A_{2}f_{\Gamma}\mathrm{d}S \nonumber\\
&+\int_{\Gamma}\partial_{\nu}\Phi f_{\Gamma}(\mathcal{S}_{1}f)_{\Gamma}\mathrm{d} S-\int_{\Gamma}\partial_{\nu}\Phi f_{\Gamma}\mathcal{S}_{2}F\mathrm{d}S\leq \dfrac{1+C_{0}}{\Upsilon}\left\langle-\mathcal{S}F,F\right\rangle+\frac{C}{h^2}\|F\|^2,
\end{align}
where $C=C(\overline{\Omega})> 0$ and $C_{0}=C(s)\in (0,1)$. Indeed, we have
\begin{align*}
\left \langle\mathcal{S}F,F\right \rangle&= \int_{\Omega}(\Delta f +\eta f)f\mathrm{d}x-\int_{\Gamma}\partial_{\nu}f f_{\Gamma}\mathrm{d}S+\int_{\Gamma}(\Delta_{\Gamma}f_{\Gamma}+\theta f_{\Gamma})\mathrm{d}S\nonumber\\
&=\int_{\Omega}\Delta f f\mathrm{d}x+\int_{\Omega}\eta |f|^2\mathrm{d}x-\int_{\Gamma}\partial_{\nu}f f_{\Gamma}\mathrm{d}S+\int_{\Gamma}\Delta_{\Gamma}f_{\Gamma}f_{\Gamma}\mathrm{d}S\nonumber\\
&+\int_{\Gamma}\theta |f_{\Gamma}|^2\mathrm{d}S\nonumber\\
&=\int_{\Gamma}\partial_{\nu}f f_{\Gamma}\mathrm{d}S-\int_{\Omega}|\nabla f|^2\mathrm{d}x+\int_{\Omega}\eta |f|^2\mathrm{d}x-\int_{\Gamma}\partial_{\nu}f f_{\Gamma}\mathrm{d}S\nonumber\\
&-\int_{\Gamma}|\nabla_{\Gamma}f_{\Gamma}|^2\mathrm{d}S+\int_{\Gamma}\theta |f_{\Gamma}|^2\mathrm{d}S\nonumber\\
&=-\int_{\Omega}|\nabla f|^2\mathrm{d}x+\dfrac{s}{2\Upsilon^2}\int_{\Omega}\left(\varphi+\dfrac{s}{2}|\nabla \varphi|^2\right) |f|^2\mathrm{d}x-\int_{\Gamma}|\nabla_{\Gamma} f_{\Gamma}|^2\mathrm{d}S\nonumber\\                    &+\dfrac{s}{2\Upsilon^2}\int_{\Gamma}\left(\varphi+\dfrac{s}{2}|\nabla_{\Gamma} \varphi|^2\right) |f_{\Gamma}|^2\mathrm{d}S.
\end{align*}
Hence
\begin{equation}\label{2.28}
\begin{array}[c]{ll}
\dfrac{1}{\Upsilon}\left\langle-\mathcal{S}F,F\right\rangle &= \dfrac{1}{\Upsilon}\displaystyle\int_{\Omega}|\nabla f|^2\mathrm{d}x+\dfrac{s}{2\Upsilon^3}\int_{\Omega}\left[-\left(\varphi+\dfrac{s}{2}|\nabla \varphi|^2\right)\right] |f|^2\mathrm{d}x+\dfrac{1}{\Upsilon}\int_{\Gamma}|\nabla_{\Gamma} f_{\Gamma}|^2\mathrm{d}S\\
&+\displaystyle\dfrac{s}{2\Upsilon^3}\int_{\Gamma}\left[-\left(\varphi+\dfrac{s}{2}|\nabla_{\Gamma} \varphi|^2\right)\right] |f_{\Gamma}|^2\mathrm{d}S.
\end{array}
\end{equation}
Next, we estimate each term of the formula \eqref{9.1}. For $s\in \left(0,1\right)$ sufficient small we have
\begin{equation}\label{14.1}
        \frac{s}{\Upsilon}\int_{\Omega}|\nabla f|^2\mathrm{d}x-\frac{2s}{\Upsilon}\int_{\Gamma}\nabla_{\Gamma}^2\varphi(\nabla_{\Gamma}f_{\Gamma},\nabla_{\Gamma}f_{\Gamma})\mathrm{d}S\leq \frac{1}{\Upsilon}\int_{\Omega}|\nabla f|^2\mathrm{d}x+\frac{1}{\Upsilon}\int_{\Gamma}|\nabla_{\Gamma}f_{\Gamma}|^2\mathrm{d}S.
\end{equation}
By using $|\nabla\varphi|^2 = -\varphi$, we obtain
\begin{align}\label{14.2}
    &\frac{-s}{\Upsilon^3}\int_{\Omega}\left(\varphi+\frac{s}{2}|\nabla\varphi|^2\right)|f|^2\mathrm{d}x-\frac{s^2(2-s)}{4Y^3}\int_{\Omega}|\nabla\varphi|^2 |f|^2\mathrm{d}x\nonumber \\
    &=\frac{1}{\Upsilon ^3}\left(-s+s^2-\frac{s^3}{4}\right)\int_{\Omega}\varphi |f|^2\mathrm{d}x\nonumber\\
    &=\frac{-s(4+s^2-4s)}{4\Upsilon ^3}\int_{\Omega}\varphi |f|^2\mathrm{d}x\nonumber\\
    &=\frac{-s(2-s)^2}{4\Upsilon^3}\int_{\Omega}\varphi |f|^2\mathrm{d}x,
\end{align}
also,
\begin{equation}\label{14.3}
  \frac{-s}{2\Upsilon^3}\int_{\Omega}\left(\varphi+\frac{s}{2}|\nabla \varphi|^2\right)|f|^2\mathrm{d}x=\frac{-s(2-s)}{4\Upsilon^3}\int_{\Omega}\varphi |f|^2\mathrm{d}x,  
\end{equation}
using \eqref{14.2} and \eqref{14.3}, we obtain
\begin{equation}\label{14}
    \frac{-s}{\Upsilon^3}\int_{\Omega}\left(\varphi+\frac{s}{2}|\nabla\varphi|^2\right)\mathrm{d}x-\frac{s^2(2-s)}{4Y^3}\int_{\Omega}|\nabla\varphi|^2 |f|^2\mathrm{d}x= (2-s)\frac{-s}{2\Upsilon^3}\int_{\Omega}\left(\varphi+\frac{s}{2}|\nabla \varphi|^2\right)|f|^2\mathrm{d}x.
\end{equation}
On the other hand,
\begin{align}\label{15}
    &\frac{-s}{\Upsilon^3}\int_{\Gamma}\left(\varphi+\frac{s}{2}|\nabla_{\Gamma}\varphi|^2\right)|f_{\Gamma}|^2\mathrm{d}S-\frac{s^2}{2\Upsilon^3}\int_{\Gamma}\left(|\nabla_{\Gamma}\varphi|^2+s\nabla_{\Gamma}^2\varphi (\nabla_{\Gamma}\varphi,\nabla_{\Gamma}\varphi)\right)|f_{\Gamma}|^2\mathrm{d}S\nonumber\\
    &+\frac{s^3}{4\Upsilon^3}\int_{\Gamma}(\partial_{\nu}\varphi)^3|f_{\Gamma}|^2\mathrm{d}S\nonumber\\
    &=2\left[\frac{-s}{2\Upsilon^3}\int_{\Gamma}\left(\varphi+\frac{s}{2}|\nabla_{\Gamma}\varphi|^2\right)|f_{\Gamma}|^2\mathrm{d}S\right]+\frac{s^3}{4\Upsilon^3}\int_{\Gamma}(\partial_{\nu}\varphi)^3|f_{\Gamma}|^2\mathrm{d}S\nonumber\\
    &-\frac{s^2}{2\Upsilon^3}\int_{\Gamma}\left(|\nabla_{\Gamma}\varphi|^2+s\nabla_{\Gamma}^2\varphi (\nabla_{\Gamma}\varphi,\nabla_{\Gamma}\varphi)\right)|f_{\Gamma}|^2\mathrm{d}S.
\end{align}
Choosing $s\in (0,1)$ sufficiently small, using $\partial_{\nu}\varphi<0$ (by Lemma \ref{nd}) and the fact that
$$\varphi+\frac{s}{2}|\nabla_{\Gamma}\varphi|^2\le \varphi+|\nabla_{\Gamma}\varphi|^2=-|\nabla \varphi|^2+|\nabla_{\Gamma}\varphi|^2=-|\partial_{\nu}\varphi|^2 <0,$$ we obtain 
\begin{equation}\label{16}
    \dfrac{s^3}{4}(\partial_{\nu}\varphi)^3\leq \frac{s^4}{2}\left(\varphi+\frac{s}{2}|\nabla_{\Gamma}\varphi|^2\right),
\end{equation}
and
\begin{equation}\label{17}
    |\nabla_{\Gamma}\varphi|^2+s\nabla_{\Gamma}^2\varphi(\nabla_\Gamma\varphi,\nabla_{\Gamma}\varphi)\geq 0.
\end{equation}
Therefore, combining \eqref{15}-\eqref{17}, we obtain
\begin{align}\label{18}
    &\frac{-s}{\Upsilon^3}\int_{\Gamma}\left(\varphi+\frac{s}{2}|\nabla_{\Gamma}\varphi|^2\right)|f_{\Gamma}|^2\mathrm{d}S-\frac{s^2}{2\Upsilon^3}\int_{\Gamma}\left(|\nabla_{\Gamma}\varphi|^2+s\nabla_{\Gamma}^2\varphi (\nabla_{\Gamma}\varphi,\nabla_{\Gamma}\varphi)\right)|f_{\Gamma}|^2\mathrm{d}S\nonumber\\
    &+\frac{s^3}{4\Upsilon^3}\int_{\Gamma}(\partial_{\nu}\varphi)^3|f_{\Gamma}|^2\mathrm{d}S\nonumber\\
    &\leq (2-s^3)\left[\frac{-s}{2\Upsilon^3}\int_{\Gamma}\left(\varphi+\frac{s}{2}|\nabla_{\Gamma}\varphi|^2\right)|f_{\Gamma}|^2\mathrm{d}S\right].
\end{align}
Furthermore, the last sum terms of \eqref{9.1} satisfies
\begin{align}\label{2.30}
&\dfrac{s}{\Upsilon}\int_{\Gamma}(\Delta \varphi+\partial_{\nu}\varphi-\Delta_{\Gamma}\varphi)\partial_{\nu}f f_{\Gamma}\mathrm{d}S +\dfrac{s}{2\Upsilon}\int_{\Gamma}(\Delta_{\Gamma}^2\varphi-\Delta_{\Gamma}\partial_{\nu}\varphi)|f_{\Gamma}|^2\mathrm{d}S\nonumber\\
&\leq \frac{s}{2\Upsilon}\int_{\Gamma}|\partial_{\nu}\varphi||\partial_{\nu}f|^2\mathrm{d}S+\frac{C}{h^2}\int_{\Gamma}|f_{\Gamma}|^2\mathrm{d}S,
\end{align}
where $C=C(\overline{\Omega})$ is a positive constant. Indeed, let $\varepsilon > 0$. Using Young inequality, $\partial_{\nu}\varphi<0$ (by Lemma \ref{nd}) and the fact that $\varphi\in C^{\infty}(\overline{\Omega})$. Then there exists a positive constant $C$ such that
\begin{align*}
\dfrac{s}{\Upsilon}\int_{\Gamma}(\Delta \varphi+\partial_{\nu}\varphi-\Delta_{\Gamma}\varphi)\partial_{\nu}f f_{\Gamma}\mathrm{d}S& =      \dfrac{s}{\Upsilon}\int_{\Gamma}\left(\frac{1}{\sqrt{\varepsilon}}|\partial_{\nu}\varphi|^{\frac{-1}{2}}(\Delta \varphi+\partial_{\nu}\varphi-\Delta_{\Gamma}\varphi)f_{\Gamma}\right)\nonumber\\
&\quad\quad\quad\quad\quad\quad\left(\sqrt{\varepsilon}|\partial_{\nu}\varphi|^{\frac{1}{2}}\partial_{\nu}f\right) \mathrm{d}S\nonumber\\
&\leq \dfrac{sC}{\varepsilon \Upsilon}\int_{\Gamma}|f_{\Gamma}|^2\mathrm{d}S+\dfrac{s\varepsilon}{\Upsilon}\int_{\Omega}|\partial_{\nu}\varphi| |\partial_{\nu}f|^2\mathrm{d}S.
\end{align*} 
Furthermore,
\[
\dfrac{s}{2 \Upsilon}\int_{\Gamma}(\Delta_{\Gamma}^2 \varphi-\Delta_{\Gamma}\partial_{\nu}\varphi))|f_{\Gamma}|^2\mathrm{d}S\leq \dfrac{sC}{2 \Upsilon}\int_{\Gamma}|f_{\Gamma}|^2\mathrm{d}S.
\]
Choosing $\varepsilon=\dfrac{1}{2}$ and using the fact that $s\in (0,1)$, $h\in(0,1]$ and $\frac{1}{\Upsilon}\leq \frac{1}{h}\leq \frac{1}{h^2}$ leads us to the desired inequality \eqref{2.30}.
Using \eqref{9.1}, \eqref{14.1}, \eqref{14}, \eqref{18} and \eqref{2.30}, we have
\begin{align}\label{01}
& \left \langle(-\mathcal{S}^\prime+[S,A])F,F\right \rangle-\int_{\Gamma}\partial_{\nu}f (A_{1}f)_{|\Gamma}\mathrm{d}S+\int_{\Gamma}\partial_{\nu}f A_{2}f_{\Gamma}\mathrm{d}S+\int_{\Gamma}\partial_{\nu}\Phi f_{\Gamma}(\mathcal{S}_{1}f)_{\Gamma}\mathrm{d} S\nonumber\\
& -\int_{\Gamma}\partial_{\nu}\Phi f_{\Gamma}\mathcal{S}_{2}F\mathrm{d}S\nonumber\\
&\leq  \dfrac{1}{\Upsilon}\displaystyle\int_{\Omega}|\nabla f|^2\mathrm{d}x+(2-s)\left[\dfrac{-s}{2\Upsilon^3}\int_{\Omega}\left(\varphi+\dfrac{s}{2}|\nabla \varphi|^2\right) |f|^2\mathrm{d}x\right]+\dfrac{1}{\Upsilon}\int_{\Gamma}|\nabla_{\Gamma} f_{\Gamma}|^2\mathrm{d}S\nonumber\\
&+(2-s^3)\left[\dfrac{-s}{2\Upsilon^3}\int_{\Gamma}\left(\varphi+\dfrac{s}{2}|\nabla_{\Gamma} \varphi|^2\right) |f_{\Gamma}|^2\mathrm{d}S\right]+\dfrac{s}{\Upsilon}\int_{\Gamma}\partial_{\nu}\varphi |\partial_{\nu}f|^2 \mathrm{d}S\nonumber\\
&+\dfrac{s}{2\Upsilon}\int_{\Gamma}|\partial_{\nu}\varphi||\partial_{\nu} f|^2\mathrm{d}S+\dfrac{C}{h^2}\|f_{\Gamma}\|_{L^2(\Gamma)}^2.
\end{align}
Since $|\partial_{\nu}\varphi|=-\partial_{\nu}\varphi$,  we obtain
\begin{align}\label{02}
    \dfrac{s}{\Upsilon}\int_{\Gamma}\partial_{\nu}\varphi |\partial_{\nu}f|^2 \mathrm{d}S+\dfrac{s}{2\Upsilon}\int_{\Gamma}|\partial_{\nu}\varphi||\partial_{\nu} f|^2\mathrm{d}S=\dfrac{s}{2\Upsilon}\int_{\Gamma}\partial_{\nu}\varphi|\partial_{\nu} f|^2\mathrm{d}S\leq 0.
\end{align}
By \eqref{01}, \eqref{02} and $\|f_{\Gamma}\|_{L^2(\Gamma)}^2\leq \|F\|^2$, we obtain
\begin{align*}
& \left \langle(-\mathcal{S}^\prime+[S,A])F,F\right \rangle-\int_{\Gamma}\partial_{\nu}f (A_{1}f)_{|\Gamma}\mathrm{d}S+\int_{\Gamma}\partial_{\nu}f A_{2}f_{\Gamma}\mathrm{d}S+\int_{\Gamma}\partial_{\nu}\Phi f_{\Gamma}(\mathcal{S}_{1}f)_{\Gamma}\mathrm{d} S\nonumber\\
& -\int_{\Gamma}\partial_{\nu}\Phi f_{\Gamma}\mathcal{S}_{2}F\mathrm{d}S\nonumber\\
&\leq  \dfrac{1}{\Upsilon}\displaystyle\int_{\Omega}|\nabla f|^2\mathrm{d}x+(2-s)\left[\dfrac{-s}{2\Upsilon^3}\int_{\Omega}\left(\varphi+\dfrac{s}{2}|\nabla \varphi|^2\right) |f|^2\mathrm{d}x\right]+\dfrac{1}{\Upsilon}\int_{\Gamma}|\nabla_{\Gamma} f_{\Gamma}|^2\mathrm{d}S\nonumber\\
&+(2-s^3)\left[\dfrac{-s}{2\Upsilon^3}\int_{\Gamma}\left(\varphi+\dfrac{s}{2}|\nabla_{\Gamma} \varphi|^2\right) |f_{\Gamma}|^2\mathrm{d}S\right]+\dfrac{C}{h^2}\|F\|^2.
\end{align*}
For $s\in (0,1)$, we have $1<2-s<2-s^3$. Hence
\begin{align*}
& \left \langle(-\mathcal{S}^\prime+[S,A])F,F\right \rangle-\int_{\Gamma}\partial_{\nu}f (A_{1}f)_{|\Gamma}\mathrm{d}S+\int_{\Gamma}\partial_{\nu}f A_{2}f_{\Gamma}\mathrm{d}S+\int_{\Gamma}\partial_{\nu}\Phi f_{\Gamma}(\mathcal{S}_{1}f)_{\Gamma}\mathrm{d} S\nonumber\\
& -\int_{\Gamma}\partial_{\nu}\Phi f_{\Gamma}\mathcal{S}_{2}F\mathrm{d}S\nonumber\\
&\leq (2-s^3)\left[ \dfrac{1}{\Upsilon}\displaystyle\int_{\Omega}|\nabla f|^2\mathrm{d}x+\dfrac{s}{2\Upsilon^3}\int_{\Omega}\left[-\left(\varphi+\dfrac{s}{2}|\nabla \varphi|^2\right)\right] |f|^2\mathrm{d}x+\dfrac{1}{\Upsilon}\int_{\Gamma}|\nabla_{\Gamma} f_{\Gamma}|^2\mathrm{d}S\right.\nonumber\\
&+\left.\dfrac{s}{2\Upsilon^3}\int_{\Gamma}\left[-\left(\varphi+\dfrac{s}{2}|\nabla_{\Gamma} \varphi|^2\right)\right] |f_{\Gamma}|^2\mathrm{d}S\right]+\dfrac{C}{h^2}\|F\|^2.
\end{align*}
This leads to the desired inequality \eqref{14.11}, with 
$C_{0}=1-s^3$.
\smallskip

\noindent\textbf{Step 5.} The following differential inequalities hold
\begin{empheq}[left = \empheqlbrace]{alignat=2}
\begin{aligned}
&\frac{1}{2}\frac{\d}{\d t}\left\Vert F\left(  \cdot,t\right)
\right\Vert ^{2}+\mathcal{N}\left(t\right)  \left\Vert F\left(
\cdot,t\right)  \right\Vert ^{2}=0, & \\
&\frac{\d}{\d t}\mathcal{N}\left(  t\right)  \leq\frac
{1+C_{0}}{\Upsilon\left(t\right)  }\mathcal{N}\left(  t\right)+\frac{C}{h^2},
\end{aligned}
\end{empheq}
where $C_{0}=1-s^3$ and $C=C(\overline{\Omega})$ is a positive constant.

Using \cite[Proposition 3]{RBKDP}, we infer, for any $0<t_{1}<t_{2}<t_{3}\leq T$, that
\[
\left(\left\Vert F\left(\cdot,t_{2}\right)  \right\Vert ^{2}\right)
^{1+M}\leq\left(  \left\Vert F\left(  \cdot,t_{1}\right)  \right\Vert
^{2}\right)  ^{M}\left\Vert F\left(  \cdot,t_{3}\right)  \right\Vert ^{2}\mathrm{e}^{D},%
\]
where%
\[
M=\dfrac{\displaystyle\int_{t_{2}}^{t_{3}}\dfrac{1}{(T-t+h)^{1+C_{0}}} \mathrm{d}t }{\displaystyle\int_{t_{1}}^{t_{2}}\dfrac{1}{(T-t+h)^{1+C_{0}}} \mathrm{d}t} \quad \text{and} \quad D=2(1+M)(t_{3}-t_{1})^2 \frac{C}{h^2}. %
\]
Consequently, we obtain
\[
\left( || U\left(  \cdot,t_{2}\right) \mathrm{e}^{\frac{\Phi\left(  \cdot,t_{2}\right)}{2}} || ^{2}  \right)  ^{1+M}\leq\left(
|| U\left(  \cdot,t_{1}\right) \mathrm{e}^{\frac{\Phi\left(  \cdot,t_{1}\right)}{2}} || ^{2}\right)^{M}|| U\left(  \cdot,t_{3}\right) \mathrm{e}^{\frac{\Phi\left(  \cdot,t_{3}\right)}{2}} || ^{2} \mathrm{e}^D\text{ .}%
\]
\smallskip

\noindent\textbf{Step 6.} We take off the weight function $\Phi$ from the integrals
\begin{equation}\label{2.36}
\begin{array}
[c]{ll}%
\left(||U\left(  \cdot,t_{2}\right) ||^{2}  \right)  ^{1+M}&\leq\exp\left[  -\left(  1+M\right)
\min\limits_{x\in\overline{\Omega}}\Phi\left(  x,t_{2}\right)
+M\max\limits_{x\in\overline{\Omega}}\Phi\left(  x,t_{1}\right)
\right]
\\
& \quad \times\left(
|| U\left(  \cdot,t_{1}\right)  || ^{2}\right)  ^{M}|| U\left(  \cdot,t_{3}\right) \mathrm{e}^{\frac{\Phi\left(  \cdot,t_{3}\right)}{2}} ||^{2} \mathrm{e}^D\text{ .}%
\end{array}
\end{equation}
Let $\omega$ be a nonempty open subset of $\Omega$. Then
\[
\begin{array}[c]{ll}
\left\lVert U\left(  \cdot,t_{3}\right) \mathrm{e}^{\frac{\Phi\left(  \cdot,t_{3}\right)}{2}} \right\rVert ^{2}&= \displaystyle\int_{\Omega}\left\vert u\left(  x,t_{3}\right)  \right\vert^{2}\mathrm{e}^{\Phi\left(  x,t_{3}\right)  } \d x+\displaystyle\int_{\Gamma}\left\vert u_{\Gamma}\left(  x,t_{3}\right)  \right\vert
^{2}\mathrm{e}^{\Phi\left(  x,t_{3}\right) }\mathrm{d}S\\
&= \displaystyle\int_{\omega}\left\vert u\left(  x,t_{3}\right)  \right\vert
^{2}\mathrm{e}^{\Phi\left(  x,t_{3}\right)  }\mathrm{d} x+\displaystyle\int_{\left.  \Omega\right\backslash \omega}\left\vert u\left(  x,t_{3}\right)  \right\vert^{2}\mathrm{e}^{\Phi\left(  x,t_{3}\right)  }\mathrm{d}x\\
&\quad+\displaystyle\int_{\Gamma}\left\vert u_{\Gamma}\left(  x,t_{3}\right)  \right\vert
^{2}\mathrm{e}^{\Phi\left(x,t_{3}\right) }\d S\\
& \leq\exp\left[ \max\limits_{x\in\overline{\omega}}
\Phi\left(  x,t_{3}\right)  \right]  \displaystyle\int_{\omega}\left\vert
u\left(  x,t_{3}\right)  \right\vert ^{2}\mathrm{d}x\\
&\quad+\exp\left[\max\limits_{x\in\overline{\left.  \Omega
\right\backslash \omega}}\Phi\left(  x,t_{3}\right)  \right]
\displaystyle\int_{\Omega}\left\vert u\left(  x,t_{3}\right)  \right\vert
^{2}\mathrm{d}x\\
&\quad+\exp\left[\max\limits_{x\in\Gamma}\Phi\left(  x,t_{3}\right)  \right]\displaystyle\int_{\Gamma}\left\vert
u_{\Gamma}\left(  x,t_{3}\right)  \right\vert ^{2}\mathrm{d}S.
\end{array}
\]
Since $\omega$ is a nonempty open subset of $\Omega$, then 
\[\exp\left[  \max\limits_{x\in\Gamma} \Phi\left(  x,t_{3}\right)  \right]\leq\exp\left[\max\limits_{x\in\overline{\left.  \Omega
\right\backslash \omega}}\Phi\left(  x,t_{3}\right)  \right]. \]
Therefore,
\begin{equation}\label{2.37}
\left\lVert U\left(  \cdot,t_{3}\right) \mathrm{e}^{\frac{\Phi\left(  \cdot,t_{3}\right)}{2}} \right\rVert ^{2}\leq \exp\left[\max\limits_{x\in\overline{\omega}}\Phi\left(  x,t_{3}\right)  \right]  \displaystyle\int_{\omega}\left\vert
u\left(  x,t_{3}\right)  \right\vert ^{2}\mathrm{d}x+\exp\left[\max\limits_{x\in\overline{\left.  \Omega\right\backslash \omega}}\Phi\left(  x,t_{3}\right)  \right]|| U(\cdot,t_{3}) ||^2.
\end{equation}
Using \eqref{2.36} and \eqref{2.37}, we obtain
\[
\begin{array}
[c]{ll}%
\left( || U\left(  \cdot,t_{2}\right)  || ^{2}  \right)  ^{1+M}&\leq \mathrm{e}^D\exp\left[  -\left(  1+M\right)
\min\limits_{x\in\overline{\Omega}}\Phi\left(  x,t_{2}\right)
+M\max\limits_{x\in\overline{\Omega}}\Phi\left(  x,t_{1}\right)
+\max\limits_{x\in\overline{\omega}}\Phi\left(  x,t_{1}\right)\right]
\\
& \quad \times\left(
|| U\left(  \cdot,t_{1}\right)  || ^{2}\right)  ^{M}\displaystyle\int_{\omega}\left\vert
u\left(  x,t_{3}\right)  \right\vert ^{2}\mathrm{d}x \\
&+ \mathrm{e}^D\exp\left[  -\left(  1+M\right)
\min\limits_{x\in\overline{\Omega}}\Phi\left(  x,t_{2}\right)
+M\max\limits_{x\in\overline{\Omega}}\Phi\left(  x,t_{1}\right)
+\max\limits_{x\in\overline{\left.  \Omega\right\backslash \omega}}\Phi\left(  x,t_{1}\right)\right]\\
&\quad  \times\left(
|| U\left(  \cdot,t_{1}\right)  || ^{2}\right)  ^{M}||U(\cdot,t_{3})||^2 .
\end{array}
\]
Using the fact that $\left\Vert U\left(  \cdot,T\right)  \right\Vert
\leq\left\Vert U\left(  \cdot,t\right)  \right\Vert \leq\left\Vert U\left(
\cdot,0\right)  \right\Vert ,$ $ 0<t<T$,\, the above inequality becomes%
\[%
\begin{array}
[c]{ll}%
\left(  \left\Vert U\left(  \cdot,T\right)  \right\Vert ^{2}\right)  ^{1+M} &
\leq \mathrm{e}^D\exp\left[  -\left(  1+M\right) \min\limits_{x\in\overline{\Omega}%
} \Phi\left(  x,t_{2}\right)  +M\max\limits_{x\in\overline{\Omega}%
}\Phi\left(  x,t_{1}\right)  +\max\limits_{x\in\overline{\omega}%
}\Phi\left(  x,t_{3}\right)  \right] \\
& \quad\times\left(  \left\Vert U\left(  \cdot,0\right)  \right\Vert
^{2}\right)  ^{M}\displaystyle\int_{\omega}\left\vert u\left(  x,t_{3}\right)
\right\vert ^{2}\mathrm{d}x\\
&  +\mathrm{e}^D\exp\left[  -\left(  1+M\right)  \min\limits_{x\in\overline{\Omega
}}\Phi\left(  x,t_{2}\right)  +M\max\limits_{x\in\overline{\Omega}%
}\Phi\left(  x,t_{1}\right)  +\max\limits_{x\in\overline{\left.
\Omega\right\backslash \omega}}\Phi\left(  x,t_{3}\right)  \right]
\\
& \quad\times\left(  \left\Vert U\left(  \cdot,0\right)  \right\Vert
^{2}\right)^{1+M}.%
\end{array}
\]
Since  $\Phi\left(  x,t\right)  =\displaystyle\frac{s\varphi\left(x\right)  }{T-t+h}$, then
\[%
\begin{array}
[c]{ll}%
\left\Vert U\left(  \cdot,T\right)  \right\Vert ^{1+M} & \leq \mathrm{e}^D\exp%
\frac{s}{2}\left[  -\frac{1+M}{T-t_{2}+h}\min\limits_{x\in\overline{\Omega}%
}\varphi\left(  x\right)  +\frac{M}{T-t_{1}+h}\max\limits_
{x\in\overline{\Omega}}\varphi\left(  x\right)  +\frac{1}%
{T-t_{3}+h}\max\limits_{x\in\overline{\omega}}\varphi\left(
x\right)  \right] \\
& \quad\times\left\Vert U\left(  \cdot,0\right)  \right\Vert ^{M}\left\Vert
u\left(  \cdot,t_{3}\right)  \right\Vert _{L^{2}\left(  \omega\right)  }\\
&+\mathrm{e}^D\exp\frac{s}{2}\left[  -\frac{1+M}{T-t_{2}+h}\min\limits_
{x\in\overline{\Omega}}\varphi\left(  x\right)  +\frac{M}%
{T-t_{1}+h}\max\limits_{x\in\overline{\Omega}}\varphi\left(
x\right)  +\frac{1}{T-t_{3}+h}\max\limits_{x\in\overline{\left.
\Omega\right\backslash \omega}} \varphi\left(  x\right)  \right] \\
& \quad\times\left\Vert U\left(  \cdot,0\right)  \right\Vert ^{1+M}\text{ .}%
\end{array}
\]
\bigskip

\noindent\textbf{Step 7.} We choose $t_{3}=T$, $t_{2}=T-\ell h$, $t_{1}=T-2\ell h$ and $\ell>1$, such that $0<2\ell
h<T.$ Then%
\[%
\begin{array}
[c]{ll}%
\left\Vert u\left(  \cdot,T\right)  \right\Vert ^{1+M_{\ell}} & \leq
 \mathrm{e}^{D_{\ell}}\exp\frac{s}{2h}\left[  -\frac{1+M_{\ell}}{1+\ell}\min\limits_
{x\in\overline{\Omega}}\varphi\left(  x\right)  +\frac{M_{\ell}%
}{1+2\ell}\max\limits_{x\in\overline{\Omega}}\varphi\left(  x\right)
+\max\limits_{x\in\overline{\omega}}\varphi\left(  x\right)  \right]
\\
& \quad\times\left\Vert U\left(  \cdot,0\right)  \right\Vert ^{M_{\ell}%
}\left\Vert u\left(  \cdot,T\right)  \right\Vert _{L^{2}\left(  \omega\right)
}\\
& +\mathrm{e}^{D_{\ell}}\exp\frac{s}{2h}\left[  -\frac{1+M_{\ell}}{1+\ell}%
\min\limits_{x\in\overline{\Omega}}\varphi\left(  x\right)
+\frac{M_{\ell}}{1+2\ell}\max\limits_{x\in\overline{\Omega}}%
\varphi\left(  x\right)  +\max\limits_{x\in\overline{\left.  \Omega
\right\backslash \omega}}\varphi\left(  x\right)  \right] \\
& \quad\times\left\Vert U\left(  \cdot,0\right)  \right\Vert ^{1+M_{\ell}%
}\text{ ,}%
\end{array}
\]
where $M_\ell = \dfrac{(\ell+1)^{C_{0}}-1}{1-\left(\dfrac{\ell+1}{2\ell+1}\right)^{C_{0}}}$ and $D_{\ell}=2C\ell^2(1+M_{\ell})$. \\
Since $\varphi(x)\leq 0$, $\forall\, x\in \overline{\Omega}$, $x_{0}\in \omega$ and $\varphi(x_{0})=0$,  then $\max\limits_{x\in\overline{\Omega}}%
\varphi\left(  x\right)=0$.  
Therefore, 
\[
 -\frac{1+M_{\ell}}{1+\ell}%
 \min\limits_{x\in\overline{\Omega}}\varphi\left(  x\right)
+\frac{M_{\ell}}{1+2\ell}\max\limits_{x\in\overline{\Omega}}
\varphi\left(  x\right)  +\max\limits_{x\in\overline{\left.  \Omega
\right\backslash \omega}}\varphi\left(  x\right)= -\frac{1+M_{\ell}}{1+\ell}%
\min\limits_{x\in\overline{\Omega}}\varphi\left(  x\right)
+\max\limits_{x\in\overline{\left.  \Omega
\right\backslash \omega}}\varphi\left(  x\right).
\]
Now, we use the fact that $C_{0} \in \left(0,1\right)$ and choose $\ell>1$ sufficiently large in order that
\[
 -\frac{1+M_{\ell}}{1+\ell}%
\min\limits_{x\in\overline{\Omega}}\varphi\left(  x\right)
+\frac{M_{\ell}}{1+2\ell}\max\limits_{x\in\overline{\Omega}}%
\varphi\left(  x\right)  +\max\limits_{x\in\overline{\left.  \Omega
\right\backslash \omega}}\varphi\left(  x\right) < 0.
\]
Consequently, there are $C_{1}>0$ and $C_{2}>0$ such that for any $h>0$
with $0<2\ell h<T$,
\[
\left\Vert U\left(  \cdot,T\right)  \right\Vert ^{1+M_{\ell}}\leq
\mathrm{e}^{D_{\ell}} \mathrm{e}^{C_{1}\frac{1}{h}}\left\Vert U\left(  \cdot,0\right)  \right\Vert
^{M_{\ell}}\left\Vert u\left(  \cdot,T\right)  \right\Vert _{L^{2}\left(
\omega\right)  }+ \mathrm{e}^{D_{\ell}} \mathrm{e}^{-C_{2}\frac{1}{h}}\left\Vert U\left(  \cdot,0\right)
\right\Vert ^{1+M_{\ell}}.%
\]
Therefore, we obtain 
\begin{equation}\label{42}
\left\Vert U\left(\cdot,T\right)  \right\Vert ^{1+M_{\ell}}\leq
\mathrm{e}^{D_{\ell}} \mathrm{e}^{C_{1}\frac{1}{h}}\left\Vert U\left(  \cdot,0\right)  \right\Vert
^{M_{\ell}}\left\Vert u\left(  \cdot,T\right)  \right\Vert _{L^{2}\left(
\omega\right)  } + \mathrm{e}^{D_{\ell}} \mathrm{e}^{-C_{2}\frac{1}{h}}\left\Vert U\left(  \cdot,0\right)
\right\Vert ^{1+M_{\ell}}.%
\end{equation}
For any $2\ell h\geq T$, $1\leq
\mathrm{e}^{D_{\ell}}\mathrm{e}^{C_{2}\frac{2\ell}{T}}\mathrm{e}^{-C_{2}\frac{1}{h}}$. Using the fact that $\left\Vert U\left(  \cdot,T\right)  \right\Vert \leq\left\Vert
U\left(  \cdot,0\right)  \right\Vert ,$ we deduce that for any
$2\ell h\geq T$,%
\begin{equation}\label{23.}
\left\Vert U\left(  \cdot,T\right)  \right\Vert ^{1+M_{\ell}}\leq \mathrm{e}^{D_{\ell}}
\mathrm{e}^{C_{1}\frac{1}{h}}\left\Vert U\left(  \cdot,0\right)  \right\Vert
^{M_{\ell}}\left\Vert u\left(  \cdot,T\right)  \right\Vert _{L^{2}\left(
\omega\right)  }+\mathrm{e}^{D_{\ell}}\mathrm{e}^{C_{2}\frac{2\ell}{T}}\mathrm{e}^{-C_{2}\frac{1}{h}}\left\Vert
U\left(\cdot,0\right)  \right\Vert ^{1+M_{\ell}}\text{ .}%
\end{equation}
 Using \eqref{42} and \eqref{23.}, for any $h >0$, we obtain that
\begin{equation*}
\left\Vert U\left(  \cdot,T\right)  \right\Vert ^{1+M_{\ell}}\leq \mathrm{e}^{D_{\ell}}
\mathrm{e}^{C_{1}\frac{1}{h}}\left\Vert U\left(  \cdot,0\right)  \right\Vert
^{M_{\ell}}\left\Vert u\left(  \cdot,T\right)  \right\Vert _{L^{2}\left(
\omega\right)  }+\mathrm{e}^{D_{\ell}}\mathrm{e}^{C_{2}\frac{2\ell}{T}}\mathrm{e}^{-C_{2}\frac{1}{h}}\left\Vert
U\left(\cdot,0\right)  \right\Vert ^{1+M_{\ell}}\text{ .}%
\end{equation*}
Finally, we choose $h>0,$ such that
\[
\mathrm{e}^{D_{\ell}}\mathrm{e}^{C_{2}\frac{2\ell}{T}}\mathrm{e}^{-C_{2}\frac{1}{h}}\left\Vert U\left(
\cdot,0\right)  \right\Vert ^{1+M_{\ell}}=\frac{1}{2}\left\Vert U\left(
\cdot,T\right)  \right\Vert ^{1+M_{\ell}}\text{ ,}%
\]
that is,
\[
\mathrm{e}^{C_{2}\frac{1}{h}}=2\mathrm{e}^{D_{\ell}}\mathrm{e}^{C_{2}\frac{2\ell}{T}}\left(  \frac{\left\Vert
U\left(  \cdot,0\right)  \right\Vert }{\left\Vert U\left(  \cdot,T\right)
\right\Vert }\right)^{1+M_{\ell}},%
\]
in order that%
\[
\left\Vert U\left(  \cdot,T\right)  \right\Vert^{1+M_{\ell}}\leq 2\mathrm{e}^{D_{\ell}}\left(
2\mathrm{e}^{D_{\ell}}\mathrm{e}^{C_{2}\frac{2\ell}{T}}\left(  \frac{\left\Vert U\left(\cdot,0\right)
\right\Vert }{\left\Vert U\left(\cdot,T\right)  \right\Vert }\right)
^{1+M_{\ell}}\right)  ^{\frac{C_{1}}{C_{2}}}\left\Vert U\left(  \cdot
,0\right)  \right\Vert ^{M_{\ell}}\left\Vert u\left(  \cdot,T\right)
\right\Vert _{L^{2}\left(  \omega\right)  }.
\]
Hence
\[
\left\Vert U\left(  \cdot,T\right)  \right\Vert^{1+M_{\ell}+\left(1+M_{\ell}\right) \frac{C_{1}}{C_{2}}} \leq2^{1+\frac{C_{1}}{C_{2}}}%
\mathrm{e}^{D_{\ell}\left(1+\frac{C_{1}}{C_{2}}\right)}\mathrm{e}^{C_{1}\frac{2\ell}{T}}\left(\left\Vert U\left(\cdot,0\right)
\right\Vert \right)
^{M_{\ell}+\left(  1+M_{\ell}\right)  \frac{C_{1}}{C_{2}}}\left\Vert u\left(
\cdot,T\right)  \right\Vert _{L^{2}\left(  \omega\right)  }.
\]
Setting $\sigma =M_{\ell}+\left(1+M_{\ell}\right) \frac{C_{1}}{C_{2}},$ $\mu =  2^{1+\frac{C_{1}}{C_{2}}}%
\mathrm{e}^{D_{\ell}\left(1+\frac{C_{1}}{C_{2}}\right)}$ and $k = 2\ell c_{1},$ we obtain
\[
\left\Vert U\left(  \cdot,T\right) \right\Vert \leq \left( \mu \mathrm{e}^{\frac{k}{T}}\right)^{\frac{1}{1+\sigma}}  \left\Vert U\left(\cdot,0\right) \right\Vert^{\frac{\sigma}{1+\sigma}}\left\Vert u\left(
\cdot,T\right)  \right\Vert _{L^{2}\left(  \omega\right)}^{\frac{1}{1+\sigma}  }\text{ .}%
\]
This  provides  the desired inequality.\endproof

Now, we state the following lemma, which is a direct corollary of Theorem \ref{thm1.1}, needed to establish the impulse controllability of system \eqref{1.1}.
\begin{lemma}\label{lemma3.1}
Let $\vartheta=\left(\upsilon,\upsilon_{\Gamma}\right)$ be the solution of \eqref{1.3}. Then there exist positive constants $\mathcal{M}_{1}$, $\mathcal{M}_{2}$ and $\delta=\delta\left(\Omega,\omega\right)$ such that the following estimate holds
\begin{equation}\label{2.38.}
\left\Vert \vartheta \left(\cdot,T\right)\right\Vert^2\leq \left(\frac{\mathcal{M}_{1}\mathrm{e}^{\frac{\mathcal{M}_{2}}{T}}}{\varepsilon^{\delta}}\right)^{2}\left\Vert \upsilon\left(\cdot,T\right)\right \Vert^2_{L^2(\omega)}+\varepsilon^2\left \Vert \vartheta^0 \right\Vert ^2.
\end{equation}
\end{lemma}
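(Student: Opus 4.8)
The plan is to derive \eqref{2.38.} directly from the logarithmic convexity estimate \eqref{1.2} of Theorem \ref{thm1.1} by converting the multiplicative (interpolation-type) inequality into an additive one via a weighted Young inequality. First I would apply \eqref{1.2} to the solution $\vartheta=(\upsilon,\upsilon_\Gamma)$ of \eqref{1.3}, noting that $\vartheta(\cdot,0)=\vartheta^0$, which yields
\[
\left\Vert\vartheta(\cdot,T)\right\Vert\leq\left(\mu\mathrm{e}^{\frac{K}{T}}\left\Vert\upsilon(\cdot,T)\right\Vert_{L^2(\omega)}\right)^{\beta}\left\Vert\vartheta^0\right\Vert^{1-\beta}.
\]
Squaring both sides gives
\[
\left\Vert\vartheta(\cdot,T)\right\Vert^2\leq\left(\mu\mathrm{e}^{\frac{K}{T}}\right)^{2\beta}\left\Vert\upsilon(\cdot,T)\right\Vert_{L^2(\omega)}^{2\beta}\left\Vert\vartheta^0\right\Vert^{2(1-\beta)}.
\]

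Next I would apply the weighted Young inequality $ab\leq\frac{\gamma^p a^p}{p}+\frac{b^q}{\gamma^q q}$ with the conjugate exponents $p=1/\beta$ and $q=1/(1-\beta)$ (so that $1/p+1/q=1$) to the product $a=\left\Vert\upsilon(\cdot,T)\right\Vert_{L^2(\omega)}^{2\beta}$ and $b=\left\Vert\vartheta^0\right\Vert^{2(1-\beta)}$, keeping a free parameter $\gamma>0$ to be chosen. Since $a^p=\left\Vert\upsilon(\cdot,T)\right\Vert_{L^2(\omega)}^{2}$ and $b^q=\left\Vert\vartheta^0\right\Vert^{2}$, this splits the right-hand side into a term proportional to $\left\Vert\upsilon(\cdot,T)\right\Vert_{L^2(\omega)}^2$ and a term proportional to $\left\Vert\vartheta^0\right\Vert^2$. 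The decisive step is to fix $\gamma$ so that the coefficient of $\left\Vert\vartheta^0\right\Vert^2$ equals exactly $\varepsilon^2$; solving $(\mu\mathrm{e}^{K/T})^{2\beta}(1-\beta)\gamma^{-1/(1-\beta)}=\varepsilon^2$ gives $\gamma=\big[(1-\beta)(\mu\mathrm{e}^{K/T})^{2\beta}\varepsilon^{-2}\big]^{1-\beta}$.

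Finally I would substitute this choice of $\gamma$ back into the coefficient of $\left\Vert\upsilon(\cdot,T)\right\Vert_{L^2(\omega)}^2$. The key simplification is that the powers of $\mu\mathrm{e}^{K/T}$ recombine as $(\mu\mathrm{e}^{K/T})^{2\beta}\cdot(\mu\mathrm{e}^{K/T})^{2(1-\beta)}=(\mu\mathrm{e}^{K/T})^2$, so that this coefficient becomes $\beta(1-\beta)^{(1-\beta)/\beta}\mu^2\mathrm{e}^{2K/T}\varepsilon^{-2(1-\beta)/\beta}$. Reading off the constants, one sets $\delta=\frac{1-\beta}{\beta}$ (which depends only on $\beta$, hence only on $\Omega$ and $\omega$), $\mathcal{M}_2=K$, and $\mathcal{M}_1=\mu\sqrt{\beta}\,(1-\beta)^{(1-\beta)/(2\beta)}$, whereupon the estimate takes precisely the form \eqref{2.38.}. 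I do not expect any genuine analytic obstacle here, since the lemma is a direct corollary of Theorem \ref{thm1.1}; the only care required is the bookkeeping that makes the exponential factor collapse to $\mathrm{e}^{2\mathcal{M}_2/T}$ in the observation term while leaving a clean $\varepsilon^2$ in front of the initial data.
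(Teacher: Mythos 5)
Your proposal is correct and takes essentially the same route as the paper: both convert the logarithmic convexity estimate \eqref{1.2} into the additive bound \eqref{2.38.} by applying Young's inequality to the product $\left\Vert \upsilon(\cdot,T)\right\Vert_{L^2(\omega)}^{2\beta}\left\Vert \vartheta^0\right\Vert^{2(1-\beta)}$ with the weight calibrated so that the coefficient of $\left\Vert \vartheta^0\right\Vert^{2}$ is exactly $\varepsilon^2$ (the paper builds the weight into the factors before using $x^{\beta}y^{1-\beta}\leq \beta x+(1-\beta)y$, while you carry a free parameter $\gamma$ and solve for it, which is the same computation). Your constants even coincide with the paper's, since its $\mathcal{K}_1\mathrm{e}^{\mathcal{K}_2/T}$ stands for $\left(\mu\mathrm{e}^{K/T}\right)^{\beta}$, so that $\mathcal{M}_1=\mathcal{K}_1^{1/\beta}(1-\beta)^{(1-\beta)/(2\beta)}\beta^{1/2}=\mu\sqrt{\beta}\,(1-\beta)^{(1-\beta)/(2\beta)}$ and $\mathcal{M}_2=\mathcal{K}_2/\beta=K$.
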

\begin{proof}
Using Theorem \ref{thm1.1}, there exist $\mathcal{K}_{1}>0$, $K_{2}>0$ and $\beta\in \left(0,1\right)$ such that
\begin{equation*}
\left\Vert \vartheta\left(\cdot,T\right)\right\Vert^2\leq \left(\mathcal{K}_{1}\mathrm{e}^\frac{\mathcal{K}_{2}}{T}\right)^2 \left\Vert \upsilon\left(\cdot,T\right)\right\Vert_{L^2(\omega)}^{2\beta}\left \Vert \vartheta\left(\cdot,0\right)\right\Vert^{2(1-\beta)}.
\end{equation*}
Let $\varepsilon>0$. We have 
\begin{align*}
\left(\mathcal{K}_{1}\mathrm{e}^\frac{\mathcal{K}_{2}}{T}\right)^2 \left\Vert \upsilon\left(\cdot,T\right)\right\Vert_{L^2(\omega)}^{2\beta}\left \Vert \vartheta\left(\cdot,0\right)\right\Vert^{2(1-\beta)}&=\left(\left(\mathcal{K}_{1}\mathrm{e}^\frac{\mathcal{K}_{2}}{T}\right)^\frac{1}{\beta}\left\Vert\upsilon(\cdot,T)\right\Vert_{L^2(\omega)}\frac{1}{\varepsilon^\frac{1-\beta}{\beta}}\left(1-\beta\right)^\frac{1-\beta}{2\beta}\right)^{2\beta}\\
&\quad \times \left( \varepsilon \left(\frac{1}{1-\beta}\right)^\frac{1}{2}\left\Vert \vartheta\left(\cdot,0\right)\right\Vert\right)^{2(1-\beta)}.
\end{align*}
Applying Young's inequality, we obtain
\[
\begin{array}[c]{ll}
\left(\mathcal{K}_{1}\mathrm{e}^\frac{\mathcal{K}_{2}}{T}\right)^2 \left\Vert \upsilon\left(\cdot,T\right)\right\Vert_{L^2(\omega)}^{2\beta}\left \Vert \vartheta\left(\cdot,0\right)\right\Vert^{2(1-\beta)}&\leq \left(\frac{\left(\mathcal{K}_{1}\mathrm{e}^\frac{\mathcal{K}_{2}}{T}\right)^\frac{1}{\beta}(1-\beta)^\frac{1-\beta}{2\beta}}{\varepsilon^\frac{1-\beta}{\beta}}\right)^2\beta \left\Vert \upsilon \left( \cdot,T\right)\right\Vert _{L^2(\omega)}^2\\
&\quad\quad\quad\quad+\varepsilon^2\left\Vert \vartheta(\cdot,0)\right\Vert^2\text{.}
\end{array}
\]
Thus,
\begin{equation*}
\left\Vert \vartheta \left(\cdot,T\right)\right\Vert^2\leq
\left(\frac{\left(\mathcal{K}_{1}\mathrm{e}^\frac{\mathcal{K}_{2}}{T}\right)^\frac{1}{\beta}(1-\beta)^\frac{1-\beta}{2\beta}}{\varepsilon^\frac{1-\beta}{\beta}}\right)^2\beta \left\Vert \upsilon \left( \cdot,T\right)\right\Vert _{L^2(\omega)}^2+\varepsilon^2\left\Vert \vartheta(\cdot,0)\right\Vert^2\text{.}
\end{equation*}
Therefore, we get our desired estimate \eqref{2.38.} with
$$
\mathcal{M}_{1}:=\mathcal{K}_{1}^{\frac{1}{\beta}}(1-\beta)^{\frac{1-\beta}{2 \beta}} \beta^{\frac{1}{2}} ; \quad \mathcal{M}_{2}:=\frac{\mathcal{K}_{2}}{\beta} ;\quad \delta:=\frac{(1-\beta)}{\beta}.
$$
\end{proof}

\section{Null approximate impulse controllability}
As an application of the inequality \eqref{1.2}, we study the null approximate impulse controllability of the heat equation with dynamic boundary conditions \eqref{1.1}, where the control function acts on a subdomain $\omega$ and at one point of time $\tau \in (0, T)$, (see more about impulse control in \cite{MBEYR,YT}).

\begin{definition}[see \cite{VTMN}] 
System \eqref{1.1} is null approximate impulse controllable at time $T$ if for any $\varepsilon > 0$ and
any $\Psi^0=\left(\psi^{0},\psi^{0}_{\Gamma}\right) \in \mathbb{L}^2$, there exists a control function $h \in L^2(\omega),$  such that the associated state at final time satisfies
\begin{equation*}
\|\Psi(\cdot, T)\| \leq \varepsilon\left\Vert \Psi^0\right\Vert .
\end{equation*}
\end{definition}
\noindent This means that for every $\varepsilon >0$ and $\Psi^0=\left(\psi^{0},\psi^{0}_{\Gamma}\right) \in \mathbb{L}^2$, the set
\begin{equation*}
\mathcal{R}_{T, \Psi^{0}, \varepsilon} :=\left\{h \in L^{2}(\omega): \text { the solution of }\eqref{1.1}\text { satisfies }\left\Vert\Psi(\cdot, T)\right\Vert \leq \varepsilon\left\Vert \Psi^{0}\right\Vert\right\},
\end{equation*}
is nonempty; which leads  to the definition of the cost of null approximate impulse control.
\begin{definition}   
The quantity  $$ K(T,\varepsilon):=\sup_{\left\|\Psi^0\right\|=1} \inf_{h \in \mathcal{R}_{T, \Psi^{0}, \varepsilon}} \|h\|_{L^{2}(\omega)}
$$ is called the cost of null
approximate impulse control at time $T.$
\end{definition}
Now, we are ready to prove the main result on null approximate impulse controllability for the system \eqref{1.1}.

\begin{proof}[Proof of Theorem \ref{thm1.2}]
Consider the following system
\begin{empheq}[left = \empheqlbrace]{alignat=2}\label{3.41}
\begin{aligned}
&\partial_{t} \upsilon-\Delta \upsilon=0, && \qquad \text { in } \Omega \times(0, T), \\
&\partial_{t}\upsilon_{\Gamma} - \Delta_{\Gamma} \upsilon_{\Gamma} + \partial_{\nu}\upsilon =0, && \qquad \text { on } \Gamma \times(0, T), \\
& \upsilon_\Gamma =\upsilon_{|\Gamma}, && \qquad \text { on } \Gamma \times(0, T), \\
&\left(\upsilon\left(\cdot,0\right),\upsilon_{\Gamma}\left(\cdot,0\right)\right)=\left(\upsilon^{0},\upsilon^{0}_{\Gamma}\right)=:\vartheta^{0}, && \qquad\text { on } \Omega\times\Gamma.\\
\end{aligned}
\end{empheq}
Let us fix $\varepsilon>0,$ $\Psi^{0}\in \mathbb{L}^2$,  and put $\kappa := \frac{\mathcal{M}_{1} \mathrm{e}^{\frac{\mathcal{M}_{2}}{T-\tau}}}{\varepsilon^{\delta}}$ where the constants $\mathcal{M}_1$, $\mathcal{M}_2$ and $\delta$ are from the estimation \eqref{2.38.}. We define the functional $J_{\varepsilon}: \mathbb{L}^2 \rightarrow \mathbb{R}$ as follows
\begin{equation*}
J_{\varepsilon}\left(\vartheta^{0}\right)=\frac{\kappa^{2}}{2}\|\upsilon(\cdot, T-\tau)\|_{L^{2}(\omega)}^{2}+\frac{\varepsilon^{2}}{2}\left\|\vartheta^{0}\right\| + \left\langle \Psi^{0}, \vartheta(\cdot,T) \right\rangle,
\end{equation*}
where $\vartheta=(\upsilon,\upsilon_\Gamma)$ is the solution of \eqref{3.41}.
Notice that $J_{\varepsilon}$ is strictly convex, $C^1$ and coercive, i.e.,  $J_{\varepsilon}\left(\vartheta^{0}\right) \rightarrow \infty$ when $\left\|\vartheta^{0}\right\| \rightarrow \infty$. Therefore, $J_{\varepsilon}$ has a unique minimizer $\tilde{\vartheta}^{0} \in \mathbb{L}^2$ such that 
$$J_{\varepsilon}\left(\tilde{\vartheta}^{0}\right)=\min\limits_{\vartheta^{0} \in \mathbb{L}^2} J_{\varepsilon}\left(\vartheta^{0}\right).$$ 
It implies that $J_{\varepsilon}^{\prime}\left(\tilde{\vartheta}^{0}\right) \zeta^{0}=0$ for any $\zeta^{0} \in \mathbb{L}^2,$ i.e., the following estimation holds for any $\zeta^{0}$
\begin{equation}\label{4.39}
\kappa^{2} \int_{\omega} \tilde{\upsilon}(x, T-\tau) z(x, T-\tau) \mathrm{d} x+\varepsilon^{2} \left\langle \tilde{\vartheta}^{0},\zeta^{0} \right\rangle+\left\langle \Psi^{0}, \zeta(\cdot,T) \right\rangle = 0,
\end{equation}
where $\tilde{\vartheta}=\left(\tilde{\upsilon},\tilde{\upsilon_\Gamma}\right)$ and $\zeta=(z,z_{\Gamma})$ are respectively the solutions of \eqref{3.41} corresponding to $\tilde{\vartheta}^{0}$ and $\zeta^{0}$.
Recall that $(\psi,\psi_{\Gamma})$ satisfies
\begin{empheq}[left = \empheqlbrace]{alignat=2} \label{5.1}
\begin{aligned}
&\partial_{t} \psi-\Delta \psi=0, && \qquad\text { in } \Omega \times(0, T) \backslash\{\tau\},\\
&\psi(\cdot, \tau)=\psi\left(\cdot, \tau^{-}\right)+\mathds{1}_{\omega} h(\cdot,\tau), && \qquad\text { in } \Omega,\\
&\partial_{t}\psi_{\Gamma} - \Delta_{\Gamma} \psi_{\Gamma} + \partial_{\nu}\psi =0, && \qquad\text { on } \Gamma \times(0, T)\backslash\{\tau\}, \\
& \psi_{\Gamma}(\cdot,t) = \psi_{|\Gamma}(\cdot,t), &&\qquad\text{ on } \Gamma \times(0, T) , \\
&\psi_{\Gamma}(\cdot, \tau)=\psi_{\Gamma}\left(\cdot, \tau^{-}\right), && \qquad\text { on } \Gamma.
\end{aligned}
\end{empheq}
Multiplying \eqref{5.1}$_{1}$ by $z(\cdot,T-t)$ and \eqref{5.1}$_{3}$ by $z_{\Gamma}(\cdot,T-t)$, for every $t\in (0,T) \backslash\{\tau\}$, we obtain
\[
\begin{array}{ll}
&\displaystyle\int_{\Omega}\partial_{t}\psi(x,t)z(x,T-t) \mathrm{d}x - \int_{\Omega}\Delta\psi(x,t)z(x,T-t) \mathrm{d}x 
+\int_{\Gamma}\partial_{t}\psi_{\Gamma}(x,t)z_{\Gamma}(x,T-t) \mathrm{d}S \\
&-\displaystyle \int_{\Gamma}\Delta_{\Gamma}\psi_{\Gamma}(x,t)z_{\Gamma}(x,T-t) \mathrm{d}S+\int_{\Gamma}\partial_{\nu}\psi(x,t)z_\Gamma(x,T-t) \mathrm{d}S =0.
\end{array}
\]
By two integration per parts, we obtain
\[
\begin{array}[c]{ll}
&\displaystyle\int_{\Omega}\partial_{t}\psi(x,t)z(x,T-t) \mathrm{d}x - \int_{\Gamma}\partial_{\nu}\psi(x,t)z_{\Gamma}(x,T-t)\mathrm{d}S+\int_{\Gamma}\psi_{\Gamma}(x,t)\partial_{\nu}z(x,T-t)\mathrm{d}S  \\
&-\displaystyle\int_{\Omega}\psi(x,t)\Delta z(x,T-t) \mathrm{d}x +
\int_{\Gamma}\partial_{t}\psi_{\Gamma}(x,t)z_{\Gamma}(x,T-t) \mathrm{d}S\\
&- \displaystyle\int_{\Gamma}\psi_{\Gamma}(x,t)\Delta_{\Gamma} z_{\Gamma}(x,T-t) \mathrm{d}S+\int_{\Gamma}\partial_{\nu}\psi(x,t)z_\Gamma(x,T-t) \mathrm{d}S =0.
\end{array}
\]
Hence
\[
\begin{array}[c]{ll}
&\displaystyle\int_{\Omega}\partial_{t}\psi(x,t)z(x,T-t) \mathrm{d}x-\int_{\Omega}\psi(x,t)\Delta z(x,T-t) \mathrm{d}x  +
\int_{\Gamma}\partial_{t}\psi_{\Gamma}(x,t)z_{\Gamma}(x,T-t) \mathrm{d}S\\
&-\displaystyle\int_{\Gamma}\psi_{\Gamma}(x,t)\left(\Delta_{\Gamma}z_{\Gamma}(x,t)-\partial_{\nu}z(x,T-t)\right)\mathrm{d}S=0\text{.}  
\end{array}
\]
Since $\zeta=(z,z_{\Gamma})$ is the solution of \eqref{3.41}, then $\Delta z(\cdot,T-t)=\partial_{t}z(\cdot,T-t)$ and $$\Delta_{\Gamma}z_{\Gamma}(\cdot,T-t)-\partial_{\nu}z(\cdot,T-t)=\partial_{t}z_{\Gamma}(\cdot,T-t).$$
Therefore,
\[
\begin{array}[c]{ll}
&\displaystyle\int_{\Omega}\partial_{t}\psi(x,t)z(x,T-t) \mathrm{d}x-\int_{\Omega}\psi(x,t)\partial_{t} z(x,T-t) \mathrm{d}x  +
\int_{\Gamma}\partial_{t}\psi_{\Gamma}(x,t)z_{\Gamma}(x,T-t) \mathrm{d}S\\
&-\displaystyle\int_{\Gamma}\psi_{\Gamma}(x,t)\partial_{t}z(x,T-t)\mathrm{d}S=0.
\end{array}
\]
That is, 
\begin{equation}\label{3.44}
  \displaystyle\int_{\Omega}\frac{d}{d\mathrm{t}}\bigg( \psi(x,t)z(x,T-t) \bigg) \mathrm{d}x + \int_{\Gamma}\frac{d}{d \mathrm{t}} \bigg( \psi_{\Gamma}(x,t)z_{\Gamma}(x,T-t)\bigg) \mathrm{d}S = 0.
\end{equation}
Integrating \eqref{3.44} over $(0, \tau)$ yields
\begin{equation*}
\int_{\Omega}\left[\psi(x,t)z(x,T-t)\right]_{0}^{\tau} \mathrm{d} x+\int_{\Gamma}\left[\psi_{\Gamma}(x,t)z_{\Gamma}(x,T-t)\right]_{0}^{\tau} \mathrm{d}S=0.
\end{equation*}
Therefore, 
\begin{equation}\label{4.41}
\int_{\Omega} \left(\psi(x,\tau^{-})z(x,T-\tau) -\psi(x,0)z(x,T)\right) \mathrm{d}x+\int_{\Gamma}\left(\psi_{\Gamma}(x,\tau^{-})z_{\Gamma}(x,T-\tau) -\psi_{\Gamma}(x,0)z_{\Gamma}(x,T)\right) \mathrm{d}S=0.
\end{equation}
Integrating \eqref{3.44} over $(\tau, T)$ gives us
\begin{equation*}
\int_{\Omega}\left[\psi(x,t)z(x,T-t)\right]_{\tau}^{T} \mathrm{d}x+\int_{\Gamma}\left[\psi_{\Gamma}(x,t)z_{\Gamma}(x,T-t)\right]_{\tau}^{T} \mathrm{d}S=0.
\end{equation*}
Hence
\begin{equation}\label{4.42}
\int_{\Omega}\left(\psi(x,T)z(x,0) -\psi(x,\tau)z(x,T-\tau)\right) \mathrm{d}x+\int_{\Gamma}\left(\psi_{\Gamma}(x,T)z_{\Gamma}(x,0) -\psi_{\Gamma}(x,\tau)z_{\Gamma}(x,T-\tau)\right) \mathrm{d}S=0.
\end{equation}
Combining \eqref{4.41}-\eqref{4.42}, and taking in consideration the fact that $\psi(\cdot, \tau)=\psi\left(\cdot, \tau^{-}\right)+\mathds{1}_{\omega} h(\tau)$ and $\psi_{\Gamma}(\cdot, \tau)=\psi_{\Gamma}\left(\cdot, \tau^{-}\right)$, we obtain
\begin{equation}\label{4.43}
\begin{aligned}
&\int_{\omega} h(\tau,x)z(x,T-\tau) \mathrm{d} x+\int_{\Omega}\psi(x,0)z(x,T)\mathrm{d} x +\int_{\Gamma}\psi_{\Gamma}(x,0)z_{\Gamma}(x,T) \mathrm{d}S \\
&-\int_{\Omega} \psi(x,T)z(x,0) \mathrm{d}x -\int_{\Gamma} \psi_{\Gamma}(x,T)z_{\Gamma}(x,0) \mathrm{d}S = 0.
\end{aligned}
\end{equation}
Thus, if we choose $h(\tau,x)=\kappa^{2} \tilde{v}(x, T-\tau)$, we obtain, from \eqref{4.39} and \eqref{4.43}, that
\begin{equation*}
\left\langle \Psi(\cdot, T) +\varepsilon^{2} \tilde{\vartheta}^{0}, \zeta^{0} \right\rangle =0, \qquad \forall \zeta^{0} \in \mathbb{L}^2.
\end{equation*}
Hence, $\Psi(x, T) = -\varepsilon^{2} \tilde{\vartheta}^{0}(x).$ Moreover, with $\zeta^{0} \equiv \tilde{\vartheta}^{0},$ using the Cauchy-Schwarz inequality, it follows from \eqref{4.39} that
\begin{equation*}
\kappa^{2}\|\tilde{\upsilon}(\cdot, T-\tau)\|_{L^{2}(\omega)}^{2}+\varepsilon^{2}\left\Vert\tilde{\vartheta}^{0}\right\Vert ^2 \leq\left\|\Psi^{0}\right\| \|\tilde{\vartheta}(\cdot, T)\|.
\end{equation*}
By virtue of the energy estimate for the system \eqref{3.41}, which is
\begin{equation*}
\|\tilde{\vartheta}(\cdot, T)\| \leq\|\tilde{\vartheta}(\cdot, T-\tau)\|,
\end{equation*}
we obtain
\begin{equation}\label{3.51}
\kappa^{2}\|\tilde{\upsilon}(\cdot, T-\tau)\|_{L^{2}(\omega)}^{2}+\varepsilon^{2}\left\|\tilde{\vartheta}^{0}\right\| ^2\leq\left\|\Psi^{0}\right\| \|\tilde{\vartheta}(\cdot, T-\tau)\|.
\end{equation} 
Applying the result in Lemma \ref{lemma3.1}, which is
\begin{equation*}
\|\tilde{\vartheta}(\cdot, T-\tau)\|^{2} \leq \kappa^{2}\|\tilde{\upsilon}(\cdot, T-\tau)\|_{L^{2}(\omega)}^{2}+\varepsilon^{2}\|\tilde{\vartheta}(\cdot, 0)\|^{2},
\end{equation*}
and using \eqref{3.51}, we obtain
\begin{equation}\label{3.53}
\|\tilde{\upsilon}(\cdot, T-\tau)\|_{L^{2}(\omega)} \leq\left\|\Psi^{0}\right\|.
\end{equation}
Finally, combining \eqref{3.51} and \eqref{3.53}, we obtain
\begin{equation*}
\kappa^{2}\|\tilde{\upsilon}(\cdot, T-\tau)\|_{L^{2}(\omega)}^{2}+\varepsilon^{2}\left\|\tilde{\vartheta}^{0}\right\| ^2\leq\left\|\Psi^{0}\right\|^2.
\end{equation*}
Recall that $\Psi(x, T) = -\varepsilon^{2} \tilde{\vartheta}^{0}(x)$ and $h(x)=\kappa^{2} \tilde{v}(x, T-\tau)$. Thus
\begin{equation*}
\frac{1}{\kappa^{2}}\|h\|_{L^{2}(\omega)}^{2}+\frac{1}{\varepsilon^{2}}\|\Psi(\cdot, T)\|^{2} \leq\left\|\Psi^{0}\right\|^{2}.
\end{equation*}
This completes the proof.
\end{proof}

\section{Conclusions and Remarks}
In this work, the impulsive null approximate controllability for the heat equation with dynamic boundary conditions is proved for a bounded convex domain $\Omega \subset \mathbb{R}^n$ with smooth boundary $\Gamma$. The used technique is based on a logarithmic convexity estimate obtained by a Carleman commutator approach. The results of this work can be generalized to the case of a star-shaped domain or the general case of a $C^2$ domain by adapting the approach given in \cite{pkm} for Dirichlet boundary conditions.

Finally, in real life problems, the impulse starts abruptly at a certain moment of time and remains active on a finite time interval. However, the time of the action is little. Such an impulse is known as non-instantaneous impulse \textbf{(NII)} (see \cite{LHWZME}). It would be of much interest to investigate the impulsive null approximate controllability for the system \eqref{1.1} with  non-instantaneous impulses.

\end{document}